\newcommand\MyFigScale{0.8205128205}
\theoremstyle{plain}
\newtheorem{theorem}{Theorem}
\newtheorem{corollary}[theorem]{Corollary}
\theoremstyle{definition}
\newtheorem{claim}[theorem]{Claim}
\newtheorem{proposition}[theorem]{Proposition}
\newtheorem{question}[theorem]{Question}
\newtheorem{definition}[theorem]{Definition}
\newtheorem{example}[theorem]{Example}
\newtheorem{remark}[theorem]{Remark}
\newcommand{\altmanifold}{\mathcal{N}}
\newcommand{\altsurface}{\mathcal{F}}
\newcommand{\altaltsurface}{\mathcal{R}}
\newcommand{\alttree}{T}
\newcommand{\compbody}{\mathcal{C}}
\newcommand{\compbodyone}{\mathcal{N}}
\newcommand{\compbodytwo}{\mathcal{K}}
\newcommand{\dual}{\Gamma}
\newcommand{\fork}{F}
\newcommand{\forkcomp}{\boldsymbol{F}}
\newcommand{\hbody}{\mathcal{H}}
\newcommand{\inj}{\operatorname{inj}}
\newcommand{\manifold}{\mathcal{M}}
\newcommand{\sd}{\operatorname{sd}}
\newcommand{\surface}{\mathcal{S}}
\newcommand{\torus}{\mathbb{T}}
\newcommand{\tri}{\mathcal{T}}
\newcommand{\twodisk}{\mathbb{D}}
\newcommand{\heeg}[1]{\mathfrak{g}\left(#1\right)}
\newcommand{\nbh}[1]{N(#1)}
\newcommand{\nsphere}[1]{\mathbb{S}^{#1}}
\newcommand{\ov}[1]{\overline{#1\vphantom{\scalebox{1.2}{$#1$}}}}
\newcommand{\pw}[1]{\operatorname{pw}(#1)}
\newcommand{\tw}[1]{\operatorname{tw}(#1)}
\newcommand{\vol}[1]{\operatorname{vol}(#1)}
\title{On the pathwidth of hyperbolic 3-manifolds\thanks{This paper is based on previously unpublished parts of the author's PhD thesis \cite{huszar2020combinatorial}.}}
\author{Krist\'of Husz\'ar\thanks{Supported by the French government through the 3IA C\^ote d'Azur Investments in the Future project managed by the National Research Agency (ANR) under the reference number ANR-19-P3IA-0002.} \space\href{https://orcid.org/0000-0002-5445-5057}{\includegraphics[height=.9em]{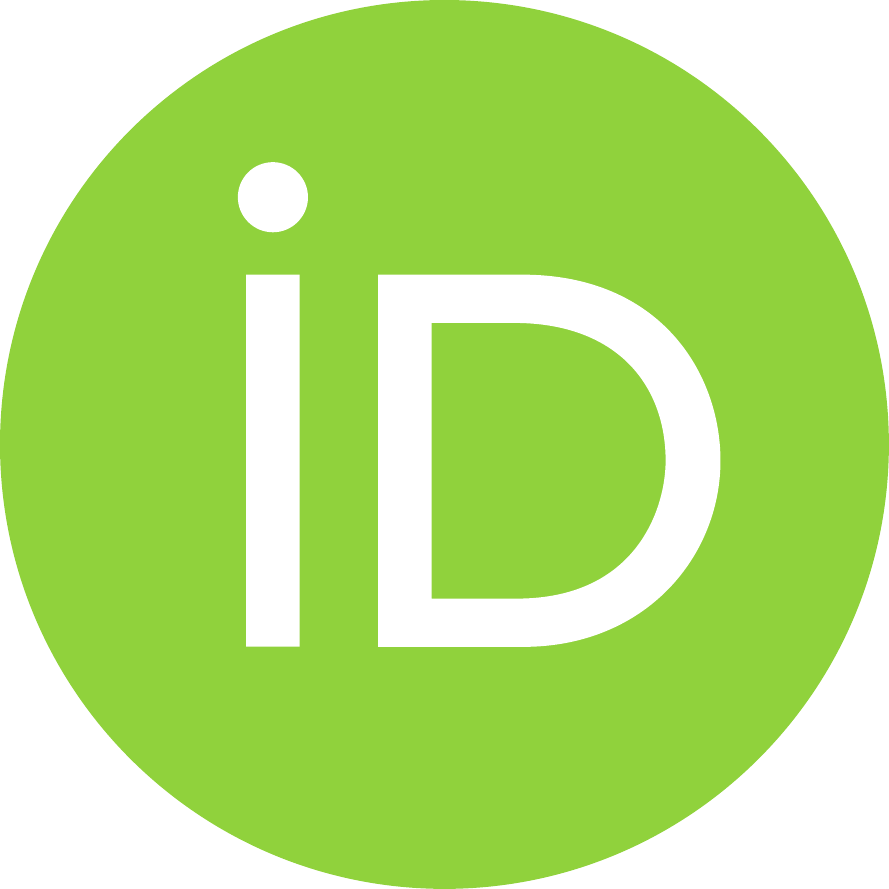}} \\ Inria Sophia Antipolis - M\'editerran\'ee \\ 2004 route des Lucioles, 06902 Sophia Antipolis, France}
\date{May 24, 2021}
\begin{document}

\maketitle

\begin{abstract}
According to Mostow's celebrated rigidity theorem, the geometry of closed hyperbolic 3-manifolds is already determined by their topology. In particular, the volume of such manifolds is a topological invariant and, as such, has been investigated for half a century.

Motivated by the algorithmic study of 3-manifolds, Maria and Purcell have recently shown that every closed hyperbolic 3-manifold $\manifold$ with volume $\vol{\manifold}$ admits a triangulation with dual graph of treewidth at most $C\cdot\vol{\manifold}$, for some universal constant $C$.

Here we improve on this result by showing that the volume provides a linear upper bound even on the pathwidth of the dual graph of some triangulation, which can potentially be much larger than the treewidth.
Our proof relies on a synthesis of tools from 3-manifold theory: generalized Heegaard splittings, amalgamations, and the thick-thin decomposition of hyperbolic 3-manifolds. We provide an illustrated exposition of this toolbox and also discuss the algorithmic consequences of the result.
\end{abstract}

\section{Introduction}
\label{sec:intro}

Algorithms in computational 3-manifold topology typically take a triangulation as input and return topological information about the underlying manifold. The difficulty of extracting the desired information, however, might greatly depend on the choice of the input triangulation. In recent years, several computationally hard problems about triangulated 3-manifolds were shown to admit algorithmic solutions that are {\em fixed-parameter tractable} (FPT) in the {\em treewidth}\footnote{The treewidth is a structural graph parameter measuring the ``tree-likeness'' of a graph, cf.\ Section \ref{sec:comb}.} of the dual graph of the input triangulation \cite{burton2017courcelle, burton2016parameterized, burton2018algorithms, pettersson2014fixed, burton2013complexity}.\footnote{For related work on FPT-algorithms in knot theory, see \cite{burton2018homfly} and \cite{maria2019parametrized} and the references therein.} These algorithms still require exponential time to terminate in the worst case. However, for triangulations with dual graph of bounded treewidth they run in polynomial time.\footnote{The running times are measured in terms of the number of tetrahedra in the input triangulation.}\textsuperscript{,}\footnote{Some of these algorithms \cite{burton2018algorithms, burton2013complexity} have been implemented in the topology software {\em Regina} \cite{burton2013regina, Regina}.}

In the light of these algorithms, it is compelling to consider the {\em treewidth $\tw{\manifold}$ of a compact $3$-manifold $\manifold$}, defined as the smallest treewidth of the dual graph of any triangulation thereof. Over the last few years, the quantitative relationship between the treewidth and other properties of 3-manifolds has been studied in various settings.\footnote{See \cite{mesmay2019treewidth} for related work in knot theory concerning a different notion of treewidth for knot diagrams.} The author together with Spreer showed, for instance, that the {\em Heegaard genus} always gives an upper bound on the treewidth (even on the {\em pathwidth}) \cite{huszar2019manifold}, and together with Wagner they established that, for certain families of 3-manifolds the treewidth can be arbitrary large \cite{huszar2019treewidth}.\footnote{For the precise statements of these results cf.\ the inequalities \eqref{eq:heeg-tw} and \eqref{eq:heeg-pw} in Section \ref{sec:comb}. For further results and a detailed discussion we refer to the author's PhD thesis \cite{huszar2020combinatorial}.}

Recently, Maria and Purcell have shown that, in the realm of {\em hyperbolic} 3-manifolds another important invariant, the {\em volume}, yields an upper bound on the treewidth \cite{maria2019treewidth}. They proved the existence of a universal constant $C > 0$, such that, for every closed hyperbolic 3-manifold $\manifold$ with treewidth $\tw{\manifold}$ and volume $\vol{\manifold}$ the following inequality holds:
\begin{align}
	\tw{\manifold} \leq C \cdot \vol{\manifold}. 
	\label{eq:tw-vol}
\end{align}

In this article we improve upon \eqref{eq:tw-vol} by showing that the volume provides a linear upper bound even on the {\em pathwidth} of a hyperbolic 3-manifold---a quantity closely related to, but potentially much larger than the treewidth. More precisely, we prove the following theorem.

\begin{theorem}
\label{thm:pw-vol}
There exists a universal constant $C' > 0$ such that, for any closed, orientable and hyperbolic $3$-manifold $\manifold$ with pathwidth $\pw{\manifold}$ and volume $\vol{\manifold}$, we have
\begin{alignat}{1}
	\pw{\manifold} \leq C' \cdot \vol{\manifold}.
	\label{eq:pw-vol}
\end{alignat}
\end{theorem}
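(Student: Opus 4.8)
The plan is to deduce \eqref{eq:pw-vol} from the inequality $\pw{\manifold}=O(\heeg{\manifold})$ of \cite{huszar2019manifold} (cf.\ \eqref{eq:heeg-pw}) by establishing the auxiliary bound $\heeg{\manifold}=O(\vol{\manifold})$; this is precisely where the three advertised tools are used. Fix a Margulis number $\epsilon_3$ and pass to the thick--thin decomposition $\manifold=\manifold_{\mathrm{thin}}\cup\manifold_{\mathrm{thick}}$. Since $\manifold$ is closed, $\manifold_{\mathrm{thin}}$ is a disjoint union of $k$ Margulis tubes $V_1,\dots,V_k$ (embedded solid tori around the short geodesics of $\manifold$), while $\manifold_{\mathrm{thick}}$ has injectivity radius bounded below by $\epsilon_3/2$ and volume at most $\vol{\manifold}$. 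Following the quantitative Margulis/Gromov estimates underlying \cite{maria2019treewidth}, one gets $k=O(\vol{\manifold})$ and---covering $\manifold_{\mathrm{thick}}$ by $O(\vol{\manifold})$ embedded balls of radius $\epsilon_3/4$ and taking the associated handle structure---a handle decomposition (equivalently, a triangulation) of $\manifold_{\mathrm{thick}}$ with $O(\vol{\manifold})$ handles. Since removing regular neighbourhoods of disjointly embedded simple closed curves cannot disconnect a connected $3$-manifold, $\manifold_{\mathrm{thick}}$ is connected, with torus boundary components $\altsurface_i=\partial V_i$, $i=1,\dots,k$.

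A handle decomposition with $h$ handles yields a Heegaard splitting of genus $O(h)$ (put the $0$- and $1$-handles on one side, the $2$- and $3$-handles on the other), so $\manifold_{\mathrm{thick}}=\hbody\cup_{\surface}\compbody$ for a genus-$g_0$ handlebody $\hbody$ and a compression body $\compbody$ with $\partial_+\compbody=\surface$ and $\partial_-\compbody=\altsurface_1\sqcup\dots\sqcup\altsurface_k$, where $g_0=O(\vol{\manifold})$. Each Margulis tube $V_i$ is a solid torus, i.e.\ a trivial genus-$1$ compression body. The compression bodies $\hbody,\compbody,V_1,\dots,V_k$, glued along $\surface$ and the tori $\altsurface_i$, thus form a generalized Heegaard splitting of $\manifold$ whose underlying graph $\dual$ is a star with centre $\manifold_{\mathrm{thick}}$ and leaves $V_1,\dots,V_k$. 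I would then \emph{amalgamate} this generalized splitting: gluing a solid torus to a torus component $\altsurface_i$ of $\partial_-\compbody$ amounts to attaching a $2$-handle along the meridian of $V_i$ (an essential curve on the incompressible torus $\altsurface_i$) and then a $3$-handle, which keeps $\compbody$ a compression body but deletes the boundary component $\altsurface_i$; performing this for all $i$ turns $\compbody\cup V_1\cup\dots\cup V_k$ into a genus-$g_0$ handlebody. Hence $\manifold=\hbody\cup_{\surface}\bigl(\compbody\cup V_1\cup\dots\cup V_k\bigr)$ is a genus-$g_0$ Heegaard splitting, so $\heeg{\manifold}\le g_0=O(\vol{\manifold})$. (Equivalently: amalgamating over the tree $\dual$ adds the genera of the pieces and subtracts the genera of the splitting surfaces, and here each solid-torus piece has genus $1$, cancelling the genus-$1$ torus along which it is glued.)

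Combining $\heeg{\manifold}=O(\vol{\manifold})$ with \eqref{eq:heeg-pw} gives $\pw{\manifold}=O(\vol{\manifold})$, i.e.\ \eqref{eq:pw-vol}, with $C'$ assembled from $\epsilon_3$, the universal constant in the ball-covering estimate, and the constant in \eqref{eq:heeg-pw}. I expect the genuine obstacle to lie in the geometric input of the first paragraph: one must show, with a constant that is \emph{linear} in the volume, that the bounded-geometry thick part admits a handle decomposition---hence a Heegaard splitting---of size $O(\vol{\manifold})$; this is the quantitative heart already present in \cite{maria2019treewidth}. The new point is organisational: unlike the crushing-based route of \cite{maria2019treewidth}, which controls only treewidth, channelling the estimate through a Heegaard splitting of $\manifold$ and invoking \eqref{eq:heeg-pw} automatically upgrades the bound to pathwidth. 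The remaining points---connectedness of $\manifold_{\mathrm{thick}}$, the behaviour of amalgamation over the star $\dual$ (reducing to the genus-$1$ cancellation above), and the bookkeeping of the universal constants---are routine; should $\manifold_{\mathrm{thick}}$ ever fail to be connected, one works component by component, replacing $\dual$ by a forest.
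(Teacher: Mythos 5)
Your proposal is correct and follows essentially the same route as the paper: a thick--thin decomposition, the J{\o}rgensen--Thurston/Kobayashi--Rieck bound giving a Heegaard splitting of the thick part of genus $O(\vol{\manifold})$, a generalized Heegaard splitting over the star-shaped dual graph with solid-torus thin pieces, amalgamation to get $\heeg{\manifold}=O(\vol{\manifold})$, and finally the inequality $\pw{\manifold}\leq 4\heeg{\manifold}-2$. The only (cosmetic) difference is that you fill the torus boundary components directly to see the amalgamated side is a handlebody, whereas the paper invokes the quantitative amalgamation formula \eqref{eq:amalgamation} to compute the resulting genus.
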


\paragraph{Outline of the proof.}
Our roadmap to establish Theorem \ref{thm:pw-vol} is similar to that in \cite{maria2019treewidth}. In particular, our construction of a triangulation of $\manifold$ with dual graph of pathwidth bounded in terms of $\vol{\manifold}$ also starts with a {\em thick-thin decomposition} $\mathscr{D}$ of $\manifold$. The two proofs, however, diverge at this point. Maria and Purcell proceed by triangulating the thick part of $\mathscr{D}$ using the work of J{\o}rgensen--Thurston {\cite[\S 5.11]{thurston2002geometry}} and Kobayashi--Rieck \cite{kobayashi2011linear}. This partial triangulation is then simplified \cite{burton2014crushing, jaco2003efficient} and completed into the desired triangulation of $\manifold$.

The novelty in our work is, that we proceed by first turning the decomposition $\mathscr{D}$ into a {\em generalized Heegaard splitting} of $\manifold$ \cite{scharlemann2016lecture, scharlemann1992thin}, where we rely on the aforementioned results to control the genera of the splitting surfaces. Next, we {\em amalgamate} this generalized Heegaard splitting into a classical one \cite{schultens1993classification}. Finally, we appeal to our earlier work \cite{huszar2019manifold} to turn this Heegaard splitting into a triangulation of $\manifold$ with dual graph of pathwidth $O(\vol{\manifold})$.

\bigskip

The proof of Theorem \ref{thm:pw-vol} provides a template for an algorithm\footnote{We refer to the discussion in \cite[Section 5.1]{maria2019treewidth} for the description of a possible computational model.} to triangulate any closed hyperbolic 3-manifold $\manifold$ in such a way, that the dual graph of the resulting triangulation has pathwidth $O(\vol{\manifold})$. Using such triangulations---that have a dual graph not only of small treewidth, but also pathwidth---as input for FPT-algorithms may significantly reduce their running time. This is because such triangulations lend themselves to {\em nice tree decompositions} (the data structure underlying many algorithms FPT in the treewidth) without {\em join bags} (those parts of a nice tree decomposition that often account for the computational bottleneck, cf.\ \cite{burton2016parameterized}).
The upshot of Theorem \ref{thm:pw-vol} is that, in case of hyperbolic 3-manifolds with bounded volume working with such triangulations is (in theory) always possible.

\paragraph{Structure of the paper.} We start with an illustrated exposition of the various notions from 3-manifold theory we rely on (Section \ref{sec:3mfds}). Then, in Section \ref{sec:comb}, we discuss the treewidth and pathwidth for graphs and 3-manifolds alike. In Section \ref{sec:proof}, we put all the pieces together to prove Theorem \ref{thm:pw-vol}. We conclude with a discussion and some open questions in Section \ref{sec:discussion}.

\paragraph{Acknowledgements.} I am grateful to Uli Wagner and Jonathan Spreer for their guidance and steady support during my PhD, and \'Eric Colin de Verdi\`ere and Herbert Edelsbrunner for their careful review of my thesis. I thank the anonymous referees for their useful suggestions to improve the exposition. I also thank Cl\'ement Maria for many stimulating discussions, and my colleagues at Inria for the warm welcome in Sophia Antipolis in the midst of a pandemic.

\section{A primer on 3-manifolds}
\label{sec:3mfds}

The main objects of study in this paper are 3-dimensional manifolds, or $3$-manifolds for short. As we will also encounter 2-manifolds, also known as {\em surfaces}, we give the general definition. A {\em $d$-dimensional manifold with boundary} is a topological space\footnote{More precisely, we only consider topological spaces which are {\em second countable} and {\em Hausdorff}.} $\manifold$ such that each point $x \in \manifold$ has a neighborhood which looks like (i.e., is homeomorphic to) the Euclidean $d$-space $\mathbb{R}^d$ or the closed upper half-space $\{(x_1,\ldots,x_d) \in \mathbb{R}^d : x_d\geq 0\}$.
The points of $\manifold$ that do not have a neighborhood homeomorphic to $\mathbb{R}^d$ constitute the {\em boundary $\partial \manifold$} of $\manifold$. A compact manifold is said to be {\em closed} if it has an empty boundary.

Two manifolds $\manifold_1$ and $\manifold_2$ are considered equivalent if they are {\em homeomorphic}, i.e., if there exists a continuous bijection $f\colon\manifold_1\rightarrow\manifold_2$ with $f^{-1}$ being continuous as well. Properties of manifolds that are preserved under homeomorphisms are called {\em topological invariants}.
We refer to \cite{schultens2014introduction} for an introduction to 3-manifolds (cf.\ \cite{hempel2004manifolds, jaco1980lectures, saveliev2012lectures, thurston2014three}).

All $3$-manifolds in this paper are assumed to be compact and orientable.

\subsection{Triangulations and handle decompositions}
\label{ssec:trg}

\paragraph{Triangulations.} By a classical result of Moise \cite{moise1952affine} (cf.\ \cite{bing1959alternative}) every compact 3-manifold admits a triangulation. To build a triangulation, take a disjoint union $\widetilde{\Delta} = \Delta_1 \cup\ldots\cup\Delta_n$ of finitely many tetrahedra with $4n$ triangular faces altogether. Let $\Phi = \{\varphi_1,\ldots,\varphi_m\}$ be a set of at most $2n$ {\em face gluings}, each of which identifies a pair of these triangular faces in such a way that vertices are mapped to vertices, edges to edges, and each face is identified with at most one other face, see Figure \ref{fig:tetrahedra}(i). The resulting quotient space $\tri = \widetilde{\Delta} / \Phi$ is called a {\em triangulation}, and the pairs of identified triangular faces are referred to as {\em triangles} of $\tri$. Note that these face gluings might identify several tetrahedral edges (or vertices) of $\widetilde{\Delta}$ resulting in a single {\em edge} (or {\em vertex}) of $\tri$.

To obtain a triangulation $\tri$ that is homeomorphic to a closed $3$-manifold $\manifold$, it is necessary and sufficient that the boundary of a small neighborhood around each vertex is a sphere, and no edge is identified with itself in reverse. If some of the vertices have small neighborhoods with boundaries being disks, then $\tri$ describes a $3$-manifold with boundary. In a computational setting, a 3-manifold is very often presented this way.

In the study of triangulations, their dual graphs play an instrumental role.\footnote{Following a convention adopted by several authors in the field of computational low-dimensional topology, throughout this paper we use the terms {\em edge} and {\em vertex} to refer to an edge or vertex in a $3$-manifold triangulation, whereas the terms {\em arc} and {\em node} denote an edge or vertex in a graph, respectively.}
Given a triangulation $\tri = \widetilde{\Delta} / \Phi$, its {\em dual graph} $\Gamma (\tri) = (V,E)$ is a multigraph\footnote{In a multigraph $G=(V,E)$ the set $E$ of arcs is a multiset, i.e., there might be multiple arcs running between two given nodes. Moreover, an arc itself can also be a multiset in which case it is called a {\em loop}. Next, when talking about graphs, we will always mean multigraphs, unless otherwise stated.} where the nodes in $V$ correspond to the tetrahedra in $\widetilde{\Delta}$, and for each face gluing $\varphi \in \Phi$ identifying two triangular faces of $\Delta_i$ and $\Delta_j$, we add an arc between the corresponding nodes in $V$, cf.\ Figure \ref{fig:tetrahedra}(ii). (Note that $i$ and $j$ could be equal.) By construction, every node of $\Gamma (\tri)$ has maximum degree $\leq 4$. Moreover, when $\tri$ triangulates a closed 3-manifold, then $\Gamma (\tri)$ is 4-regular.

\begin{figure}[ht]
	\centerline{\includegraphics[scale=\MyFigScale]{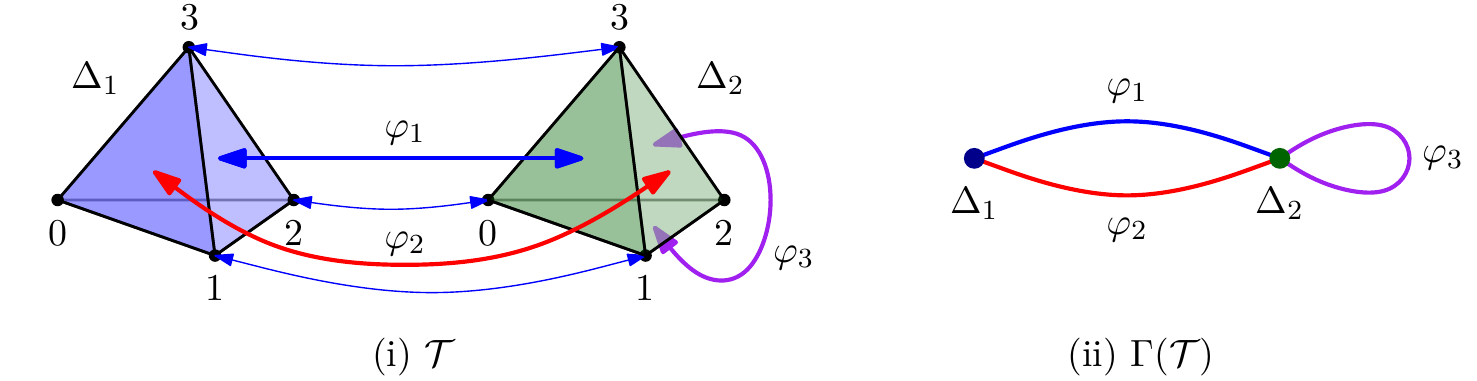}}
	\caption{(i) A triangulation $\tri = \widetilde{\Delta} / \Phi$ with two tetrahedra $\widetilde{\Delta} = \{\Delta_1,\Delta_2\}$ and three face gluing maps $\Phi = \{\varphi_1,\varphi_2,\varphi_3\}.$ $\varphi_1$ is specified to be $\Delta_1 (123) \protect\overset{\displaystyle\varphi_1}{\longleftrightarrow}\Delta_2 (103)$. (ii) The dual graph $\Gamma (\tri)$ of $\tri$.}
	\label{fig:tetrahedra}
\end{figure}

\paragraph{Handle decompositions.} It follows from Morse theory (and also from the existence of triangulations) that every compact $3$-manifold can be built from finitely many solid building blocks called $0$-, $1$-, $2$-, and {\em $3$-handles}. In such a {\em handle decomposition} all handles are homeomorphic to $3$-balls, and are only distinguished in how they are glued together.
To construct a closed $3$-manifold from handles, we may start with a disjoint union of $3$-balls, or {\em $0$-handles}, where further $3$-balls are glued to the boundary of the existing decomposition along pairs of $2$-dimensional disks ({\em $1$-handles}), or along annuli ({\em $2$-handles}). This process is iterated until the boundary consists of a disjoint union of $2$-spheres. These are then eliminated by gluing in one $3$-ball per boundary component, the {\em $3$-handles} of the decomposition. 

\begin{figure}[ht]
	\centerline{\includegraphics[scale=\MyFigScale]{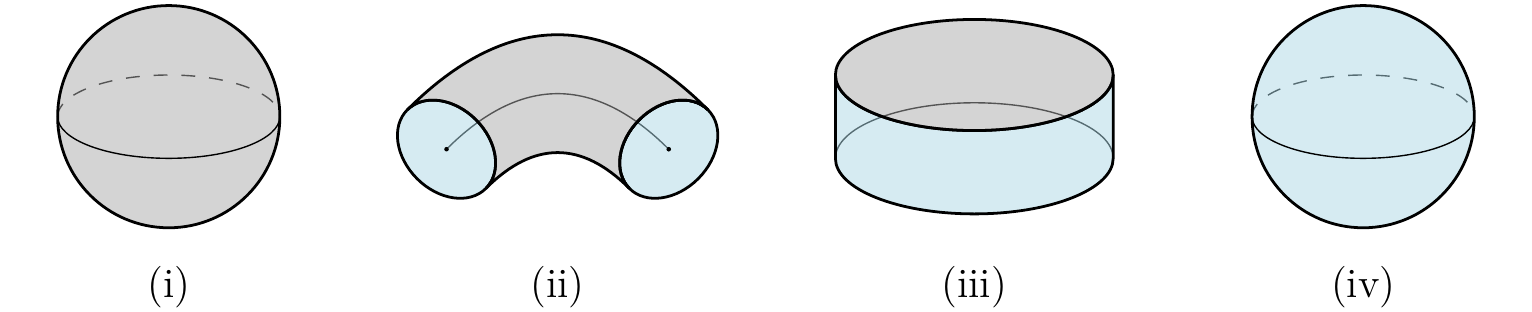}}
	\caption{(i) A 0-handle, (ii) a 1-handle, (iii) a 2-handle, and (iv) a 3-handle. The attaching sites are indicated with light blue. For a 1-handle, this is a disjoint union of two disks, for a 2-handle an annulus, and for a 3-handle the entire 2-sphere boundary.}
	\label{fig:handles}
\end{figure}

\subsection{Handlebodies and compression bodies}
\label{ssec:bodies}

A {\em handlebody} $\hbody$ is a connected $3$-manifold with boundary that is built from (finitely many) $0$-handles and  $1$-handles. It can also be seen as a thickened graph. Up to homeomorphism, a handlebody $\hbody$ is determined by the genus $g(\partial\hbody)$ of its boundary.

Let $\surface$ be a compact, orientable (not necessarily connected) surface. A {\em compression body} is a $3$-manifold $\compbody$ obtained from $\surface \times [0,1]$ by (optionally) attaching some $1$-handles to $\surface \times \{1\}$, and (optionally) filling in some of the $2$-sphere components of $\surface \times \{0\}$ with $3$-balls. $\compbody$ has two sets of boundary components: $\partial_- \compbody = \surface \times \{0\} \setminus \{\text{filled-in 2-sphere components}\}$ and $\partial_+ \compbody = \partial \compbody \setminus \partial_- \compbody$. We call $\partial_{+} \compbody$ the {\em upper boundary}, and $\partial_{-} \compbody$ the {\em lower boundary} of $\compbody$.

Dual to this construction, a compression body $\compbody$ can also be built by starting with a closed, orientable surface $\altsurface$, thickening it to $\altsurface \times [0,1]$, (optionally) attaching some $2$-handles along $\altsurface \times \{0\}$, and (optionally) filling in some of the resulting $2$-spheres with $3$-balls. The upper and lower boundary are given by $\partial_{+} \compbody = \altsurface \times \{1\}$ and $\partial_{-} \compbody = \partial \compbody \setminus \partial_{+} \compbody$. 

Note that every handlebody is also a compression body, where all 2-sphere components are eliminated in the last step.

See Figure \ref{fig:compbody} in Appendix \ref{app:compbody} for an illustration of the primal and dual constructions.

\subsection{Heegaard splittings}
\label{ssec:heegaard}

Introduced in \cite{heegaard1916analysis}, Heegaard splittings have been central to the study of 3-manifolds for over a century.
Given a closed, orientable 3-manifold $\manifold$, a {\em Heegaard splitting} is a decomposition $\manifold = \hbody \cup_\surface \hbody'$ where $\hbody$ and $\hbody'$ are homeomorphic handlebodies with $\hbody \cup \hbody' = \manifold$ and $\hbody \cap \hbody' = \partial\hbody = \partial\hbody' = \surface$ called the {\em splitting surface}. The {\em Heegaard genus} $\heeg{\manifold}$ of $\manifold$ is the smallest genus $g(\surface)$ over all Heegaard splittings of $\manifold.$ See \cite{scharlemann2002heegaard} for a comprehensive survey.

\begin{example}[Heegaard splittings from triangulations, I] Given a triangulation $\tri$ of a closed, orientable 3-manifold $\manifold$, let $\tri^{(1)}$ denote its $1$-skeleton consisting of the vertices and edges of $\tri$. Thickening up $\tri^{(1)}$, i.e., taking its regular neighborhood, in $\manifold$ yields a handlebody $\hbody_1$. The closure $\hbody_2$ of the complement $\manifold \setminus \hbody_1$  is also a handlebody homeomorphic to a regular neighborhood of $\Gamma(\tri)$, and $\manifold = \hbody_1 \cup \hbody_2$ is a Heegaard splitting of $\manifold$.
\label{ex:heegaard}
\end{example}

\paragraph{Heegaard splittings of 3-manifolds with boundary.} Using compression bodies, one can generalize Heegaard splittings to 3-manifolds with nonempty boundary. Let $\manifold$ be a $3$-manifold and $\partial_1\manifold \cup \partial_2\manifold = \partial\manifold$ be an arbitrary partition of its boundary components. There exist compression bodies $\compbody_1$ and $\compbody_2$ with $\compbody_1 \cup \compbody_2 = \manifold$, $\partial_-\compbody_1 = \partial_1\manifold$, $\partial_-\compbody_2 = \partial_2\manifold$, and $\compbody_1 \cap \compbody_2 = \partial_+\compbody_1 = \partial_+\compbody_2$. The decomposition $\manifold = \compbody_1 \cup_\surface \compbody_2$ is called a Heegaard splitting of $\manifold$ {\em compatible with the partition} $\partial_1\manifold \cup \partial_2\manifold$. Its {\em splitting surface} is $\surface = \compbody_1 \cap \compbody_2$. The Heegaard genus $\heeg{\manifold}$ is again the minimum genus $g(\surface)$ over all such decompositions.

See Example \ref{ex:heegaard_boundary} in Appendix \ref{app:heegaard} for an extension of Example \ref{ex:heegaard} to this setting.

\subsection{Generalized Heegaard splittings}
\label{ssec:gen_heegaard}

The notion of a Heegaard splitting, where a 3-manifold is built by gluing two handlebodies together (or two compression bodies, in case of 3-manifolds with boundary), was refined by Scharlemann and Thompson in a seminal paper \cite{scharlemann1992thin}. In a {\em generalized Heegaard splitting} a 3-manifold is constructed from several pairs of compression bodies. This construction arises naturally, e.g., when a 3-manifold is assembled by first attaching only some of the $0$- and $1$-handles before attaching any $2$- and $3$-handles.

Informally, a {\em generalized Heegaard splitting} of a $3$-manifold $\manifold$ is a decomposition
\begin{align}
	\mathscr{D} = \left\{ \manifold_i : i \in I,~\textstyle\bigcup_{i \in I}\manifold_i = \manifold,~\text{and}~\operatorname{int}(\manifold_i) \cap \operatorname{int}(\manifold_j) = \emptyset~\text{for}~i \neq j \right\}
	\label{eq:decomp}
\end{align}
into finitely many $3$-manifolds with pairwise disjoint interiors that intersect along surfaces, together with an ``appropriate'' Heegaard splitting for each $\manifold_i$. We make this now precise.

Given a decomposition $\mathscr{D}$ as above, consider its {\em dual graph},\footnote{Not to be confused with the dual graph of a triangulation.} which is a multigraph $\Gamma(\mathscr{D}) = (I,E)$ with nodes corresponding to the $\manifold_i$ and arcs between $i$ and $j$ to the connected components of $\manifold_i \cap \manifold_j$ (Figure \ref{fig:manifold_decomposition}). Pick an ordering of $I$, i.e., a bijection $\ell\colon I \rightarrow \{1,\ldots,|I|\}$. For any $i \in I$, let $\partial_1 \manifold_i \cup \partial_2 \manifold_i$ be a partition of the connected components of $\partial \manifold_i$ so that $\partial_1 \manifold_i$ (resp.\ $\partial_2 \manifold_i$) contains the components glued to those of any $\manifold_j$ with $\ell(j) < \ell(i)$ (resp.\ $\ell(j) > \ell(i)$). Those components of $\partial \manifold_i$ which contribute to the boundary of $\manifold$ are partitioned among $\partial_1 \manifold_i$ and $\partial_2 \manifold_i$ arbitrarily. For each $i \in I$, choose a Heegaard splitting $\manifold_i = \compbodyone_i \cup_{\surface_i}\compbodytwo_i$ of $\manifold_i$ compatible with the partition $\partial_1 \manifold_i \cup \partial_2 \manifold_i$ of the boundary components (cf.\ Example \ref{ex:heegaard_boundary}). We obtain a {\em generalized Heegaard splitting} of $\manifold$ (Figure \ref{fig:graph_splitting_example}).

\begin{figure}[H]
	\centering
	\includegraphics[scale=\MyFigScale]{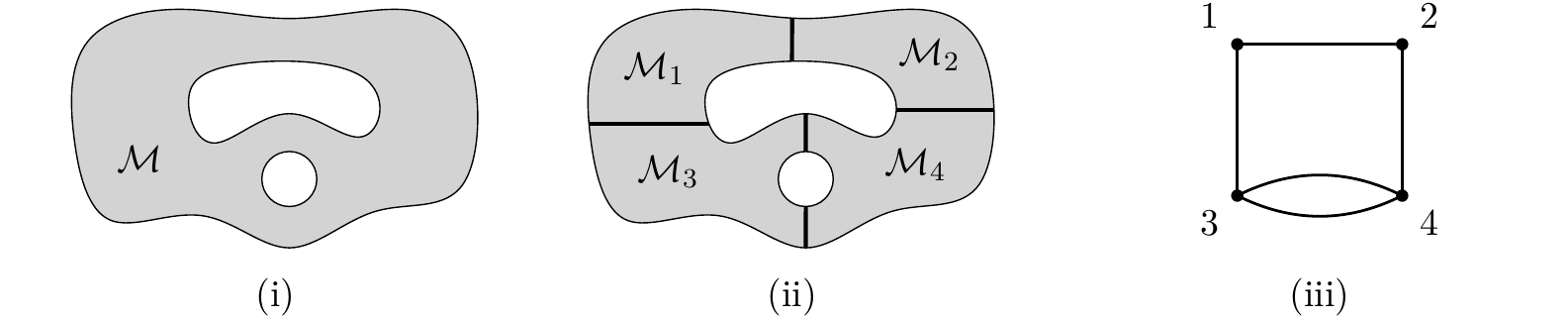}
	\caption{(i) Schematic of a closed 3-manifold $\manifold$ with nontrivial first homology, (ii) a decomposition $\mathscr{D}$ of $\manifold$ into four submanifolds, and (iii) the dual graph $\Gamma(\mathscr{D})$ of $\mathscr{D}$.}
	\label{fig:manifold_decomposition}
\end{figure}

\begin{figure}[H]
	\centering
	\includegraphics[scale=\MyFigScale]{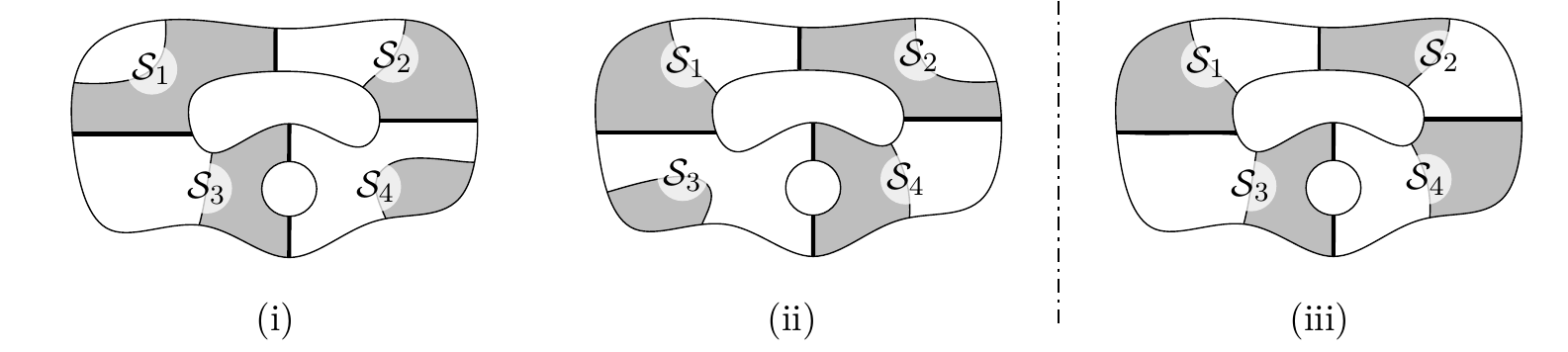}
	\caption{(i)--(ii) Schematics of two generalized Heegaard splittings of $\manifold$ stemming from the decomposition shown on Figure \ref{fig:manifold_decomposition}. These splittings respectively correspond to the orderings $ \ell_1(i)=i$ ($i \in I = \{1,2,3,4\}$), and $\ell_2(1) = 2$, $\ell_2(2) = 4$, $\ell_2(3) = 1$, $\ell_2(4) = 3$. (iii) A non-example.}
	\label{fig:graph_splitting_example}
\end{figure}

When we only need to talk about the constituents of a decomposition or the pieces of a generalized Heegaard splitting of a 3-manfiold $\manifold$ we use the shorthand notation
\begin{align}
\manifold = \bigcup_{i \in I}\manifold_i \quad \text{or} \quad \manifold = \bigcup_{i \in I}(\compbodyone_i \cup_{\surface_i}\compbodytwo_i), \quad \text{where}\quad \compbodyone_i \cup_{\surface_i}\compbodytwo_i = \manifold_i.
	\label{eq:graph}
\end{align}

\paragraph{Fork complexes.} When connectivity properties of the graph $\Gamma(\mathscr{D})$ underlying a given splitting are relevant, it may be more convenient to work with so-called fork complexes. Here we give a brief overview of this language. For more details, see \cite[Chapter 5]{scharlemann2016lecture}.

A {\em fork complex} is essentially a decorated version of $\Gamma(\mathscr{D})$. It is a labeled graph in which the compression bodies of a given decomposition are modeled by {\em fork}s. More precisely, an {\em $n$-fork} is a tree $\fork$ with $n+2$ nodes $V(\fork)=\{g,p,t_1,\ldots,t_n\}$ with $p$ being of degree $n+1$ and all other nodes being leaves. The nodes $g$, $p$, and the $t_i$ are called the $grip$, the $root$, and the $tine$s of $\fork$, respectively (Figure \ref{fig:forkcomp_examples}(i) shows a $0$- and a $3$-fork).
We think of a fork $\fork = \fork_\compbody$ as an abstraction of a compression body $\compbody$, such that the grip of $\fork$ corresponds to $\partial_+\compbody$, whereas the tines correspond to the connected components of $\partial_-\compbody$.

\begin{figure}[ht]
	\centering
	\includegraphics[scale=\MyFigScale]{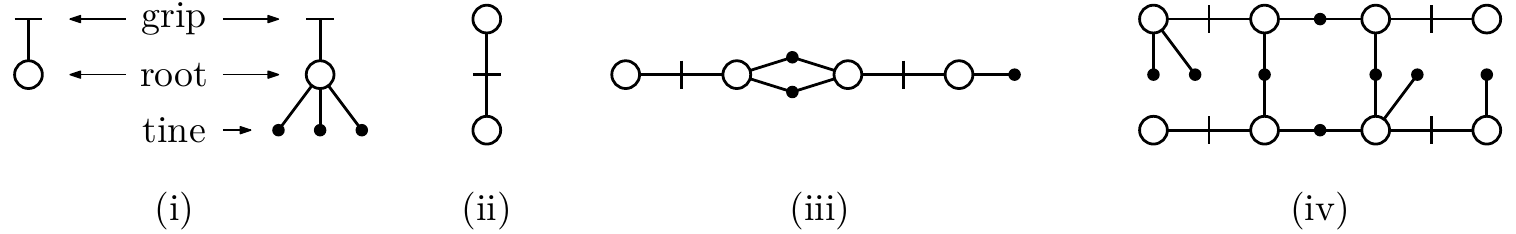}
	\caption{Fork complexes of a Heegaard splitting (ii) and of generalized Heegaard splittings (iii)--(iv). Reproduced from \cite[Figure 1]{huszar2019treewidth}.}
	\label{fig:forkcomp_examples}
\end{figure}

Informally, a {\em fork complex} $\forkcomp$ (representing a given generalized Heegaard splitting of a $3$-manifold $\manifold$) is obtained by taking several forks (corresponding to the compression bodies which constitute $\manifold$), and identifying grips with grips, and tines with tines (following the way the boundaries of these compression bodies are glued together). The set of grips and tines which remain unpaired is denoted by $\partial\forkcomp$ (as they correspond to surfaces which constitute the boundary $\partial\manifold$). See Figure \ref{fig:forkcomp_examples} for illustrations, and \cite[Section 5.1]{scharlemann2016lecture} for further details.

\begin{figure}[ht]
	\centering
	\includegraphics[scale=\MyFigScale]{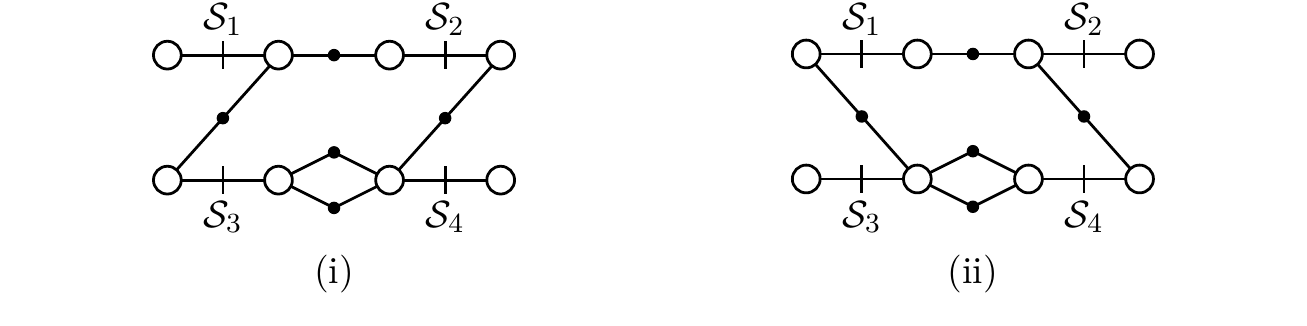}
	\caption{Fork complexes representing the generalized Heegaard splittings shown on Figure \ref{fig:graph_splitting_example}.}
	\label{fig:forkcomp_two_splittings}
\end{figure}

\paragraph{Amalgamations.} Introduced by Schultens in \cite{schultens1993classification}, {\em amalgamation} is a useful procedure that turns a generalized Heegaard splitting into a classical one. There are several excellent references where amalgamations are discussed in detail (cf.\ \cite[Section 2]{bachman2017computing}, \cite[Section 2.3]{derby-talbot2009stabilization}, \cite[Section 5.4]{scharlemann2016lecture}), therefore here we rely on a simple example to illustrate this operation.

Let $\manifold = (\compbodyone_1 \cup_{\surface_1}\compbodytwo_1) \cup_{\altaltsurface} (\compbodyone_2 \cup_{\surface_2}\compbodytwo_2)$ be a generalized Heegaard splitting of $\manifold$, which we would like to amalgamate to form a classical Heegaard splitting $\manifold = \compbodyone \cup_\surface \compbodytwo$, see Figure \ref{fig:amalgamation}. Recall that every compression body $\compbody$ can be obtained by first taking the thickened version $\partial_-\compbody \times [0,1]$ of its lower boundary $\partial_-\compbody$ and then attaching some $1$-handles to $\partial_-\compbody \times \{1\}$ (see steps P1 and P2 in Figure \ref{fig:compbody}). In our example $\partial_-\compbodytwo_1  = \altaltsurface = \partial_-\compbodyone_2$, so $\compbodytwo_1$ can be built from $\altaltsurface \times [-1,0]$ by attaching two $1$-handles $h^{(1)}_1$ and $h^{(1)}_2$ along $\altaltsurface \times \{-1\}$. Similarly, $\compbodyone_2$ is constructed by taking $\altaltsurface \times [0,1]$ and attaching the $1$-handles $h^{(2)}_1$ and $h^{(2)}_2$ to $\altaltsurface \times \{1\}$.

The {\em amalgamation} process consists of two steps:
\begin{enumerate*}
\itemsep0em
	\item Collapse $\altaltsurface \times [-1,1]$ to $\altaltsurface \times \{0\}$, such that the attaching sites of the $1$-handles $h^{(1)}_1$, $h^{(1)}_2$,  $h^{(2)}_1$ and $h^{(2)}_2$ remain pairwise disjoint. (This can be achieved by slightly deforming the attaching maps, if necessary.)
	\item Set $\compbodyone = \compbodyone_1 \cup  h^{(2)}_1 \cup h^{(2)}_2$ and $\compbodytwo = \compbodytwo_2 \cup  h^{(1)}_1 \cup h^{(1)}_2$, see Figure \ref{fig:amalgamation}(ii).
\end{enumerate*}

\medskip

\begin{figure}[ht]
	\centering
	\includegraphics[scale=\MyFigScale]{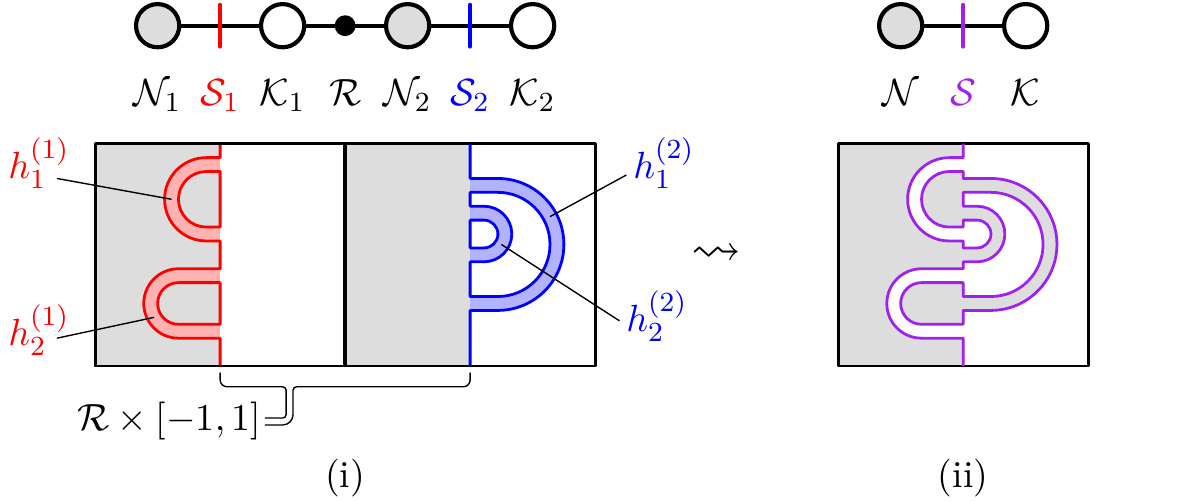}
	\caption{Amalgamating a generalized Heegaard splitting into a Heegaard splitting.}
	\label{fig:amalgamation}
\end{figure}

If $\altaltsurface$ is connected, then for the genus of the amalgamated Heegaard surface $\surface$ we have
\begin{align}
	g(\surface) = g(\surface_1) + g(\surface_2) - g(\altaltsurface).
	\label{eq:amalgamation_example}
\end{align}

However, in case $\altaltsurface$ has multiple connected components, then \eqref{eq:amalgamation_example} does not hold anymore. The procedure of amalgamation nevertheless works for arbitrary generalized Heegaard splittings (cf.\ Remark \ref{rem:generalized_heegaard}), and the formula \eqref{eq:amalgamation_example} can be adapted to the general setting as follows, by taking into account the Euler characteristic of the dual graph of the decomposition.

\begin{theorem}[Quantitative Amalgamation; cf.\ Theorems 2.8 and 2.9 in \cite{bachman2017computing}]~
\label{thm:amalgamation}
\begin{enumerate}
	\item Any generalized Heegaard splitting $\manifold = \bigcup_{i \in I}(\compbodyone_i \cup_{\surface_i}\compbodytwo_i)$ of a given $3$-manifold $\manifold$ can be amalgamated to a (classical) Heegaard splitting $\manifold = \compbodyone \cup_{\surface} \compbodytwo$ thereof.
	\item Let $\mathscr{D}$ be the decomposition $\manifold = \bigcup_{i \in I}\manifold_i$ underlying the generalized Heegaard splitting above, and $\Gamma(\mathscr{D})=(I,E)$ be its dual graph with Euler characteristic $\chi(\Gamma(\mathscr{D}))$. For any $e = \{u,v\} \in E$, let $\altaltsurface_e$  be the connected component of $\manifold_u \cap \manifold_v$ dual to $e$.\footnote{Note that there might be multiple arcs between the nodes $u$ and $v$ in $\Gamma(\mathscr{D})$. We account for all.} Then the genus $g(\surface)$ of the amalgamated Heegaard surface $\surface$ satisfies
\begin{align}
	g(\surface) = \sum_{i \in I}g(\surface_i) - \sum_{e \in E}g(\altaltsurface_e)+1-\chi(\Gamma(\mathscr{D})).
	\label{eq:amalgamation}
\end{align}
\end{enumerate}
\end{theorem}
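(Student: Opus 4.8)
The plan is to treat the two assertions separately: the first is the classical amalgamation procedure, and the genus formula \eqref{eq:amalgamation} will follow from it by an Euler-characteristic count. For part~1 I would invoke Schultens' amalgamation \cite{schultens1993classification} (see also \cite[\S2]{bachman2017computing}, \cite[\S5.4]{scharlemann2016lecture}). Each thin surface $\altaltsurface_e$, corresponding to an arc $e=\{u,v\}$ of $\Gamma(\mathscr{D})$ with, say, $\ell(u)<\ell(v)$ (so that $\altaltsurface_e$ is a component of $\partial_-\compbodytwo_u\cap\partial_-\compbodyone_v$), carries a product collar $\altaltsurface_e\times[-1,1]\subseteq\compbodytwo_u\cup\compbodyone_v$; collapsing all of these collars at once---after small isotopies making the finitely many $1$-handle attaching regions pairwise disjoint---fuses the pieces $\manifold_i$ into one block carrying a genuine Heegaard splitting $\manifold=\compbodyone\cup_\surface\compbodytwo$, where $\partial_-\compbodyone=\partial_1\manifold$ collects the boundary components of $\manifold$ assigned ``downward'' by the partitions $\partial_1\manifold_i\cup\partial_2\manifold_i$, and $\partial_-\compbodytwo=\partial_2\manifold$ the rest. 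What I really need to extract from the construction is the $1$-handle bookkeeping: each $1$-handle of each ``lower'' compression body $\compbodyone_i$ survives as a distinct $1$-handle of $\compbodyone$ and no new $1$-handles appear; symmetrically for the $\compbodytwo_i$ and $\compbodytwo$. (Which of $\compbodyone_i,\compbodytwo_i$ contributes to $\compbodyone$ rather than $\compbodytwo$ is dictated by the obvious $2$-colouring of the fork complex---grips meet grips and tines meet tines, so the roles alternate consistently.)

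For part~2, I would first observe that \eqref{eq:amalgamation} is \emph{formally equivalent} to the additive identity
\[
\chi(\surface)=\sum_{i\in I}\chi(\surface_i)-\sum_{e\in E}\chi(\altaltsurface_e).
\]
Indeed, $\surface$, every $\surface_i=\partial_+\compbodyone_i$ (thick surfaces of Heegaard splittings) and every $\altaltsurface_e$ (a connected component of $\manifold_u\cap\manifold_v$, by definition of the dual graph) are connected, so plugging $g(\,\cdot\,)=1-\tfrac12\chi(\,\cdot\,)$ and $\chi(\Gamma(\mathscr{D}))=|I|-|E|$ into \eqref{eq:amalgamation} and cancelling yields exactly the displayed equation (and conversely). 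So it is enough to establish that identity.

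To do so I would use that any compression body $\compbody$ assembled from $\partial_-\compbody\times[0,1]$ by attaching $h(\compbody)$ $1$-handles satisfies $\chi(\partial_+\compbody)=\chi(\partial_-\compbody)-2\,h(\compbody)$ (each $1$-handle drops the Euler characteristic of the top surface by $2$); here I assume no sphere-filling occurs, which is automatic in the irreducible setting of Theorem~\ref{thm:pw-vol} and which one reduces to in general by first capping $2$-sphere boundary components with $3$-balls (this changes neither side of \eqref{eq:amalgamation}). Applying this to each $\compbodyone_i$ gives $\chi(\surface_i)=\chi(\partial_-\compbodyone_i)-2\,h(\compbodyone_i)$ and to the amalgamated $\compbodyone$ gives $\chi(\surface)=\chi(\partial_1\manifold)-2\,h(\compbodyone)$. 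Next, $\bigsqcup_{i\in I}\partial_-\compbodyone_i=\bigsqcup_{i\in I}\partial_1\manifold_i$ is, component by component, the disjoint union of all thin surfaces $\altaltsurface_e$ (each arising exactly once, inside $\partial_1\manifold_v$ for the $\ell$-larger endpoint $v$ of $e$) together with $\partial_1\manifold$, so $\sum_{i\in I}\chi(\partial_-\compbodyone_i)=\chi(\partial_1\manifold)+\sum_{e\in E}\chi(\altaltsurface_e)$. Combining these three equalities, the identity $\chi(\surface)=\sum_i\chi(\surface_i)-\sum_e\chi(\altaltsurface_e)$ reduces precisely to $h(\compbodyone)=\sum_{i\in I}h(\compbodyone_i)$, which is the $1$-handle count recorded in the first paragraph (and visible in Figure~\ref{fig:amalgamation}, where $\compbodyone$ is $\compbodyone_1$ together with the two $1$-handles of $\compbodyone_2$). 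The mirror statement $h(\compbodytwo)=\sum_i h(\compbodytwo_i)$ holds by the same argument and is the consistency check that the two expressions for $\chi(\surface)$ agree.

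The main obstacle is thus not the arithmetic above but the amalgamation construction itself---verifying that the simultaneous collapse of the thin collars is well defined and compatible with the global combinatorics of $\mathscr{D}$ when $\Gamma(\mathscr{D})$ has cycles or parallel arcs, and that $1$-handles are transported exactly as claimed. This is the content of Schultens' theorem and its quantitative refinements in \cite{bachman2017computing}, which I would cite; granting it, the rest is the elementary Euler-characteristic accounting sketched here, the only subtleties being the connectedness of all surfaces involved and the (harmless, in our setting) exclusion of sphere components.
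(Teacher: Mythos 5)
Your route genuinely differs from the paper's: the paper does not prove this theorem at all, but imports both parts from Bachman--Derby-Talbot--Sedgwick \cite{bachman2017computing} (Theorems 2.8 and 2.9), its in-text discussion only working out the two-piece case \eqref{eq:amalgamation_example}. You keep the citation for the hard part (the amalgamation construction and the fact that handles are transported wholesale, which is indeed where cycles and parallel arcs of $\Gamma(\mathscr{D})$ must be handled), but you make part~2 self-contained: the translation of \eqref{eq:amalgamation} into the identity $\chi(\surface)=\sum_{i}\chi(\surface_i)-\sum_{e}\chi(\altaltsurface_e)$ is correct (all surfaces involved are closed, connected and orientable, and $\chi(\Gamma(\mathscr{D}))=|I|-|E|$ absorbs exactly the constant terms), and so is the observation that each thin surface $\altaltsurface_e$ occurs exactly once among the lower boundaries $\partial_-\compbodyone_i$, namely at the $\ell$-larger endpoint of $e$. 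This is a more informative argument than the bare citation the paper gives.

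There is, however, a concrete error in your bookkeeping. The relation $\chi(\partial_+\compbody)=\chi(\partial_-\compbody)-2\,h(\compbody)$ fails precisely when $\partial_-\compbody=\emptyset$, i.e., for handlebodies: it would give $h=g-1$, whereas a genus-$g$ handlebody is one $0$-handle plus $g$ $1$-handles and cannot be assembled from $\emptyset\times[0,1]$ at all. Handlebodies cannot be excluded here: whenever $\partial_1\manifold_i=\emptyset$ the compression body $\compbodyone_i$ is a handlebody, and for closed $\manifold$ --- the case Theorem \ref{thm:pw-vol} actually needs --- this holds for the lowest piece and for the amalgamated $\compbodyone$ itself (in the paper's application to the thick-thin decomposition essentially all lower compression bodies are handlebodies, e.g.\ solid tori). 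Your proposed escape via irreducibility, ``no sphere-filling,'' and capping sphere boundary components is beside the point: these handlebodies arise from empty lower boundary, not from reducibility of $\manifold$, so as written your steps establishing $\chi(\surface_i)=\chi(\partial_-\compbodyone_i)-2h(\compbodyone_i)$ and $\chi(\surface)=\chi(\partial_1\manifold)-2h(\compbodyone)$ fail in the main case of interest. The repair is small: use $\chi(\partial_+\compbody)=\chi(\partial_-\compbody)+2z(\compbody)-2h(\compbody)$, where $z(\compbody)$ is the number of $0$-handles, and record that amalgamation transports $0$-handles as well as $1$-handles (so that $z-h$ is additive over the pieces); with that amendment your cancellation goes through verbatim and yields \eqref{eq:amalgamation}.
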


\begin{remark}
\label{rem:generalized_heegaard}
In the definition of generalized Heegaard splittings, ordering the vertices of $\Gamma(\mathscr{D})$ and choosing the Heegaard splittings of the $\manifold_i$ in a compatible way might seem to be an ad-hoc requirement. However, this property arises naturally, when a generalized Heegaard splitting is constructed from a sequence of handle attachments. It also ensures that a generalized Heegaard splitting can always be amalgamated into a classical one. This feature lies at the heart of many applications, including the main result of \cite{bachman2017computing} according to which the problem of computing the Heegaard genus is \textbf{NP}-hard. We also make great use of the amalgamation procedure in Section \ref{sec:proof} to establish Theorem \ref{thm:pw-vol}.
\end{remark}

\subsection{Hyperbolic 3-manifolds}
\label{ssec:hyperbolic}

Hyperbolic geometry has been playing a role in the study of 3-manifolds for over a century \cite{mcmullen2011evolution}, but it rose into particular prominence after Thurston formulated the {\em geometrization conjecture} \cite{thurston1982three}, famously resolved by Perelman twenty years later \cite{perelman2002entropy, perelman2003ricci} (cf.\ \cite{porti2008geometrization}). Hyperbolic 3-manifolds constitute the richest family among geometric 3-manifolds, and, to this date, they remain the least understood. We refer to \cite{martelli2016introduction} for an introduction to this area.

A 3-manifold $\manifold$ is {\em hyperbolic}, if its interior can be obtained as a quotient of the hyperbolic 3-space $\mathbb{H}^3$ by a discrete group of isometries acting freely on $\mathbb{H}^3$. Equivalently, if the interior of $\manifold$ admits a complete Riemannian metric of constant sectional curvature $-1$. Throughout this section, $\manifold$ is assumed to be an orientable Riemannian 3-manifold. After fixing an orientation on $\manifold$, its metric tensor induces a ``volume form'' $\omega$. This in turn leads to the notion of {\em volume} defined via the integral
\[
	\vol{\mathcal{D}} = \int_\mathcal{D} \omega
\]
for any open set $\mathcal{D} \subseteq \manifold$. Also, any submanifold of $\manifold$ admits a Riemannian metric induced by the metric tensor of $\manifold$. Thus we may measure lengths of paths and areas of surfaces in $\manifold$ as well. We refer to \cite[Section 1.2]{martelli2016introduction} for details.

If $\manifold$ is compact, then $\vol{\manifold}$ is finite. The next striking result has been of paramount importance in geometric topology, as it says that ``geometric properties'' of finite-volume hyperbolic 3-manifolds are actually topological invariants.

\begin{theorem}[Mostow Rigidity Theorem \cite{mostow1968quasi}, cf.\ {\cite[Theorem 1.7.1]{aschenbrenner2015manifold}}, {\cite[Chapter 13]{martelli2016introduction}}]
\label{thm:rigidity}
Let $\manifold$ and $\altmanifold$ be finite-volume hyperbolic $3$-manifolds. Every isomorphism $\pi_1(\manifold) \rightarrow \pi_1(\altmanifold)$ between the fundamental groups of $\manifold$ and $\altmanifold$ is induced by a unique isometry $\manifold \rightarrow \altmanifold$.
\end{theorem}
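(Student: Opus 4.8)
The plan is to run the classical argument of Mostow, as streamlined by Prasad and Gromov. Write $\Gamma = \pi_1(\manifold)$ and $\Gamma' = \pi_1(\altmanifold)$, realized as discrete torsion-free subgroups of $\mathrm{Isom}^+(\mathbb{H}^3)$ whose quotients are the interiors of $\manifold$ and $\altmanifold$. Given an isomorphism $\varphi\colon \Gamma \to \Gamma'$, the first step is to upgrade it to a $\varphi$-equivariant map of universal covers. Since hyperbolic manifolds are aspherical, $\varphi$ is induced by a homotopy equivalence $f\colon \manifold \to \altmanifold$; I would take $f$ smooth (hence Lipschitz) and lift it to $\tilde f\colon \mathbb{H}^3 \to \mathbb{H}^3$ with $\tilde f(\gamma x) = \varphi(\gamma)\,\tilde f(x)$. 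Using that a homotopy inverse of $f$ is also Lipschitz and that the two composites are boundedly close to the respective identities (here is where compactness of $\manifold$ enters), one checks $\tilde f$ is a quasi-isometry of $\mathbb{H}^3$.

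The second step uses the stability of quasi-geodesics in $\mathbb{H}^3$ (the Morse lemma): a quasi-isometry of $\mathbb{H}^3$ extends continuously to a homeomorphism $\partial\tilde f\colon S^2_\infty \to S^2_\infty$ of the sphere at infinity, and this extension inherits equivariance, $\partial\tilde f(\gamma\cdot\xi) = \varphi(\gamma)\cdot\partial\tilde f(\xi)$ for all $\gamma\in\Gamma$, $\xi\in S^2_\infty$, and is moreover quasiconformal. The crux of the whole proof is to show that $\partial\tilde f$ is in fact \emph{conformal}, i.e., the restriction to $S^2_\infty$ of some $g\in\mathrm{Isom}^+(\mathbb{H}^3)$. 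Here I would invoke Mostow's ergodicity argument: a quasiconformal homeomorphism of $S^2$ is differentiable almost everywhere and carries a measurable Beltrami coefficient (equivalently, a measurable $L^\infty$ conformal structure); equivariance makes this conformal structure $\Gamma$-invariant; and since the diagonal $\Gamma$-action on $S^2_\infty\times S^2_\infty$ (equivalently, the geodesic flow on the unit tangent bundle of $\manifold$) is ergodic, an invariant conformal structure must coincide almost everywhere with the standard one, so the Beltrami coefficient vanishes and $\partial\tilde f$ is conformal. (One could substitute Gromov's route via the simplicial volume instead: straighten singular cycles to geodesic ones, use that the regular ideal tetrahedron is the unique volume-maximizer among ideal tetrahedra, and deduce that $\partial\tilde f$ sends the vertex set of almost every regular ideal tetrahedron to that of a regular ideal tetrahedron, which again forces conformality.)

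Once $\partial\tilde f = g|_{S^2_\infty}$ for an isometry $g$, equivariance gives $g\gamma g^{-1} = \varphi(\gamma)$ for every $\gamma\in\Gamma$ (two isometries of $\mathbb{H}^3$ that agree on $S^2_\infty$ are equal), hence $g\Gamma g^{-1} = \Gamma'$, and $g$ descends to an isometry $\manifold \to \altmanifold$ inducing $\varphi$. For uniqueness, if $g_1,g_2$ both descend to isometries inducing $\varphi$, then $g_2^{-1}g_1$ commutes with every element of $\Gamma$; but a lattice in $\mathrm{Isom}^+(\mathbb{H}^3)$ has trivial centralizer in $\mathrm{Isom}(\mathbb{H}^3)$ (by the Borel density theorem, or directly because its limit set is all of $S^2_\infty$), so $g_1 = g_2$ and the induced isometry of $\manifold$ is unique.

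The main obstacle is the conformality step of the second paragraph: turning the a priori merely quasiconformal boundary map into a genuine M\"obius transformation is precisely where rigidity is won, and it is the only step that consumes substantive input (ergodicity of the geodesic flow, or the rigidity of maximal-volume ideal simplices). I should also flag that the finite-volume (cusped) case of the statement needs extra care at two points that do not arise for the closed manifolds used in this paper: that $\tilde f$ may still be taken to be a quasi-isometry despite non-compactness, and that the ergodicity input still applies; both are handled by the standard thick–thin analysis of the cusps, so for Theorem~\ref{thm:pw-vol} the cleaner closed-manifold version suffices.
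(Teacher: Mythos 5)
The paper itself does not prove Theorem \ref{thm:rigidity}: it is imported as a classical result with citations to Mostow, Aschenbrenner--Friedl--Wilton, and Martelli, and only its consequence, Corollary \ref{cor:vol}, is actually used in the proof of Theorem \ref{thm:pw-vol}. So there is no in-paper argument to compare against; judged on its own, your sketch is a faithful outline of the standard proof found in the cited sources: asphericality upgrades the isomorphism $\varphi$ to a homotopy equivalence, compactness makes the equivariant lift a quasi-isometry of $\mathbb{H}^3$, Morse stability of quasi-geodesics yields an equivariant quasiconformal extension to $S^2_\infty$, and the crux---promoting quasiconformal to M\"obius---is won either by Mostow's ergodicity argument or by the Gromov--Thurston simplicial-volume/ideal-simplex route (the latter being the proof Martelli actually presents for the closed case). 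The uniqueness argument via triviality of the centralizer of a lattice, using that the limit set is all of $S^2_\infty$, is also correct.

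Two places where your sketch compresses genuinely nontrivial work deserve flagging. First, in the conformality step, ergodicity of the deck group acting on $S^2_\infty$ alone only forces the dilatation to be a.e.\ constant; to rule out a nontrivial invariant line field (the case of constant dilatation $>1$) one really needs ergodicity of the action on pairs of boundary points, i.e.\ of the geodesic flow---so the ``equivalently'' in your parenthesis is in fact the input you must use, not an optional rephrasing. Second, in the finite-volume cusped case, before any thick--thin analysis one must know that $\varphi$ respects the peripheral structure (in dimension three: maximal subgroups isomorphic to $\mathbb{Z}^2$ are exactly the cusp subgroups), so that the homotopy equivalence can be chosen proper; without this the quasi-isometry claim for the lift fails. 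Since the paper only invokes the closed case, neither caveat affects its application of the theorem.
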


\begin{corollary}
\label{cor:vol}
If two hyperbolic $3$-manifolds have different volume, then they cannot be homotopy equivalent, hence they cannot be homeomorphic.
\end{corollary}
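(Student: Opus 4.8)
The plan is to read off the corollary from the Mostow Rigidity Theorem (Theorem~\ref{thm:rigidity}) by contraposition: assuming two hyperbolic $3$-manifolds $\manifold$ and $\altmanifold$ are homotopy equivalent, I would show $\vol{\manifold} = \vol{\altmanifold}$.

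The first step is to pass from a homotopy equivalence to a $\pi_1$-isomorphism. A homotopy equivalence $f \colon \manifold \to \altmanifold$ induces an isomorphism $f_* \colon \pi_1(\manifold) \xrightarrow{\sim} \pi_1(\altmanifold)$ of fundamental groups (with respect to a basepoint $x_0$ and its image $f(x_0)$; for connected manifolds the choice of basepoint is immaterial). Since all manifolds considered here are compact, $\manifold$ and $\altmanifold$ have finite volume, so Theorem~\ref{thm:rigidity} applies and produces an isometry $\varphi \colon \manifold \to \altmanifold$ inducing $f_*$.

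The second step is to observe that an isometry preserves volume. By definition $\varphi$ pulls back the metric tensor of $\altmanifold$ to that of $\manifold$, hence $\varphi^*\omega_{\altmanifold} = \pm\,\omega_{\manifold}$ for the associated volume forms, the sign recording whether $\varphi$ is orientation-preserving. The change-of-variables formula then gives $\vol{\manifold} = \vol{\altmanifold}$. Contrapositively, distinct volumes rule out a homotopy equivalence; and since every homeomorphism is in particular a homotopy equivalence, distinct volumes also rule out a homeomorphism, which is the claim.

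I do not anticipate a genuine obstacle here: the statement is a formal consequence of Theorem~\ref{thm:rigidity}. The only ingredients beyond it are the homotopy invariance of $\pi_1$ and the fact that an isometry preserves the volume form up to sign, both of which are standard.
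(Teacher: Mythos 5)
Your proposal is correct and follows exactly the route the paper intends: the corollary is stated as an immediate consequence of Theorem~\ref{thm:rigidity} (the paper gives no separate proof), namely that a homotopy equivalence induces a $\pi_1$-isomorphism, Mostow rigidity upgrades this to an isometry, and isometries preserve volume. Your added remarks on finite volume from compactness and on the volume form being preserved up to sign are accurate and fill in the standard details.
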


\paragraph{Thick-thin decompositions.} As mentioned in the \nameref{sec:intro}, a key ingredient in the proof of Theorem \ref{thm:pw-vol} is the the {\em thick-thin decomposition theorem}, a fundamentally important structural result for hyperbolic manifolds of any dimension. In order to formulate it, we need to introduce the {\em injectivity radius} of a Riemannian manifold.

\begin{definition}[injectivity radius] Let $\manifold$ be a Riemannian manifold and $x \in \manifold$. The {\em injectivity radius of $\manifold$ at $x$}, denoted $\inj_x(\manifold)$, is the supremal value $r  > 0$ such that the metric ball of radius $r$ around $x$ is embedded in $\manifold$. The {\em injectivity radius of $\manifold$} is defined as the infimal value of $\inj_x(\manifold)$, i.e., $\inj(\manifold) = \inf\{\inj_x(\manifold) : x\in \manifold\}$.
\label{def:injrad}
\end{definition}

After fixing some threshold $\varepsilon> 0$, a Riemannian manifold $\manifold$ naturally decomposes into an {\em $\varepsilon$-thick} and an {\em $\varepsilon$-thin} part based on the injectivity radius of its points:
\begin{align}
	\manifold_{[\varepsilon,\infty)} = \{x \in \manifold : \inj_x(\manifold) \geq \varepsilon/2\} \quad \text{and} \quad \manifold_{(0,\varepsilon]} = \{x \in \manifold : \inj_x(\manifold) \leq \varepsilon/2\}.
\label{eq:thickthin}
\end{align}

We are now in the position to state the thick-thin decomposition theorem, according to which, for a sufficiently small constant $\varepsilon > 0$ only depending on the dimension $d$, the $\varepsilon$-thin part of any orientable hyperbolic $d$-manifold has a well-understood structure.\footnote{The manifolds in consideration are also required to be {\em complete} (as metric spaces). However, the way we define hyperbolic $d$-manifolds (i.e., quotients of $\mathbb{H}^d$ under discrete groups of isometries acting freely) automatically ensures their completeness.}

\begin{theorem}[Thick-Thin Decomposition; cf.\ {\cite[Chapter 4]{martelli2016introduction}}, {\cite[Section 5.3]{purcell2020book}}]
There exists a universal constant $\varepsilon_d > 0$, depending only on the dimension $d$, such that for any $\varepsilon \in (0,\varepsilon_d]$, the $\varepsilon$-thin part of any orientable hyperbolic $d$-manifold $\manifold$ consists of tubes around short geodesics diffeomorphic to $\nsphere{1} \times \mathbb{D}^{d-1}$, or cusps.\footnote{A $d$-dimensional cusp is a $d$-manifold with boundary that is diffeomorphic to $\mathcal{N} \times [0,\infty)$, where $\mathcal{N}$ is a $(d-1)$-dimensional flat, i.e., Euclidean, manifold. See \cite[Section 4.1]{martelli2016introduction} for a precise definition.}
\label{thm:thickthin}
\end{theorem}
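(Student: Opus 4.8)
Write $\manifold = \mathbb{H}^d/\Gamma$ with $\Gamma$ a discrete, torsion-free group of orientation-preserving isometries. The first step is to translate the defining condition of the thin part into group theory: since $\Gamma$ is torsion-free and the quotient has no conjugate points, for a lift $\tilde x \in \mathbb{H}^d$ of $x \in \manifold$ one has $\inj_x(\manifold) = \tfrac12\inf_{\gamma \neq 1} d_{\mathbb{H}^d}(\tilde x, \gamma\tilde x)$, so $x \in \manifold_{(0,\varepsilon]}$ exactly when there is a nontrivial $\gamma \in \Gamma$ with $d_{\mathbb{H}^d}(\tilde x, \gamma \tilde x) \leq \varepsilon$. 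To each such $x$ I would attach the subgroup $\Gamma_\varepsilon(\tilde x) = \langle \gamma \in \Gamma : d_{\mathbb{H}^d}(\tilde x, \gamma\tilde x) \leq \varepsilon\rangle$ generated by the ``short'' elements, and study how this group, and hence the local picture of $\manifold$ near $x$, looks.

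The heart of the matter is the \emph{Margulis lemma}: there is a universal $\varepsilon_d > 0$, depending only on $d$, such that for every $\varepsilon \leq \varepsilon_d$ and every $\tilde x$ the group $\Gamma_\varepsilon(\tilde x)$ is virtually nilpotent. I would deduce this from the Zassenhaus--Kazhdan--Margulis neighborhood of the identity in the Lie group $\mathrm{Isom}(\mathbb{H}^d)$ together with a compactness argument: conjugating so that $\tilde x$ is a fixed basepoint, the short elements land in a fixed compact neighborhood of the (compact) point stabilizer, over which one gets a uniform Zassenhaus-type statement, and discreteness of $\Gamma$ upgrades ``generated by elements of a Zassenhaus neighborhood'' to ``virtually nilpotent''. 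Next I would classify the outcome: a discrete, torsion-free, virtually nilpotent subgroup of $\mathrm{Isom}^+(\mathbb{H}^d)$ is elementary, i.e.\ it has a finite orbit in $\overline{\mathbb{H}^d}$, and in the torsion-free case this leaves exactly two kinds --- (a) infinite cyclic, generated by a loxodromic isometry with a geodesic axis $A$, or (b) a discrete group of parabolics fixing a single $\xi \in \partial\mathbb{H}^d$, acting on every horosphere centered at $\xi$ as a discrete group of Euclidean isometries.

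With this dichotomy in hand, the local geometry of the thin part follows from convexity of the displacement function $y \mapsto d_{\mathbb{H}^d}(y,\gamma y)$ along geodesics of $\mathbb{H}^d$. In case (a), $\{y : d_{\mathbb{H}^d}(y, gy) \leq \varepsilon\}$ is a convex metric tube around the axis $A$, and quotienting by $\langle g\rangle$ turns it into a solid tube $\nsphere{1} \times \twodisk^{d-1}$ around the closed geodesic $A/\langle g\rangle$ --- a short geodesic. In case (b), the displacement of a nontrivial parabolic fixing $\xi$ decreases monotonically as $y \to \xi$, so the union over nontrivial $\gamma \in \Gamma_\varepsilon$ of the sets $\{y : d_{\mathbb{H}^d}(y,\gamma y) \leq \varepsilon\}$ is an open horoball $B$ based at $\xi$; then $B/\Gamma_\varepsilon$ is diffeomorphic to $\altmanifold \times [0,\infty)$, where $\altmanifold$ is the flat $(d-1)$-manifold obtained as the quotient of a horosphere by $\Gamma_\varepsilon$. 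These are exactly the tubes and cusps in the statement.

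The remaining --- and most delicate --- step is to show that $\manifold_{(0,\varepsilon]}$ is precisely the disjoint union of these pieces, one per connected component. For this I would argue that $x \mapsto \bigl(\text{fixed-point set of } \Gamma_\varepsilon(\tilde x)\bigr)$ is locally constant on the thin part: if $x$ and a nearby thin point $y$ lie in the same component, then, choosing lifts compatibly, the short generators at $\tilde x$ and those at $\tilde y$ all have displacement $\leq \varepsilon_d$ at a common intermediate point, hence lie in one virtually nilpotent --- therefore elementary --- group, which forces $\Gamma_\varepsilon(\tilde x)$ and $\Gamma_\varepsilon(\tilde y)$ to share the same axis or the same parabolic fixed point. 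A connectedness argument then identifies each component of the thin part with a single Margulis tube or cusp as above, and distinct components, carrying distinct fixed-point data, are disjoint. I expect the main obstacle to be exactly the uniformity of $\varepsilon_d$: it must be fixed once and for all, small enough that \emph{any} configuration of short elements arising \emph{anywhere} in \emph{any} orientable hyperbolic $d$-manifold sits inside a Zassenhaus neighborhood and that the merging argument above never leaves the virtually nilpotent regime --- and it is the compactness input in the Margulis lemma that makes such a universal choice possible.
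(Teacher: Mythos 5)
Your sketch is the standard argument---the Margulis lemma obtained from Zassenhaus--Kazhdan--Margulis neighborhoods plus compactness, the classification of discrete torsion-free elementary subgroups into loxodromic-cyclic and parabolic types, and convexity/monotonicity of displacement functions to identify the components as tubes and cusps---which is precisely the proof in the sources the paper cites (Martelli, Ch.~4; Purcell, \S 5.3); the paper does not reprove the theorem but quotes it, noting in Remark~\ref{rem:thickthin} that it is a corollary of the Margulis Lemma of Kazhdan--Margulis. So your proposal matches the paper's (cited) approach, with only sketch-level imprecisions: the thin component around a short geodesic is the union of the displacement sublevel sets of \emph{all} nontrivial powers of the loxodromic (not just the generator), and for $d \geq 4$ screw parabolics make the cusp component only diffeomorphic to, not literally equal to, a horoball quotient.
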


\begin{figure}[ht]
	\centering
	\includegraphics[scale=\MyFigScale]{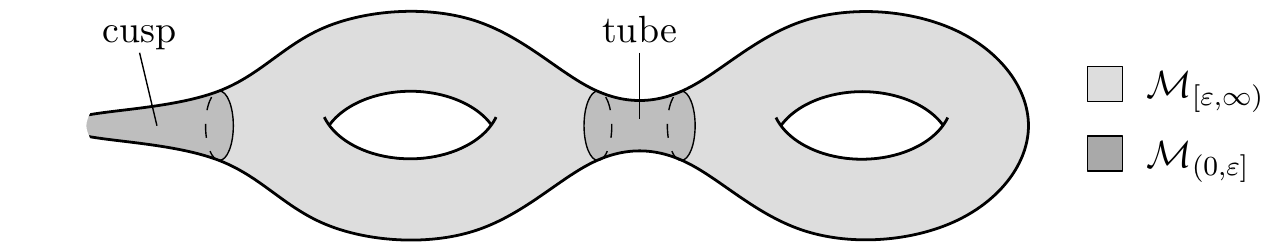}
	\caption{Thick-thin decomposition of a non-compact hyperbolic surface.}
	\label{fig:thickthin}
\end{figure}

\begin{remark} We conclude with some remarks about the thick-thin decomposition.
\label{rem:thickthin}
\begin{enumerate}
	\item In case of compact 3-manifolds, there are no cusps in the thick-thin decomposition, but only tubes. In dimension three, they are homeomorphic to solid tori. This is important, as Theorem \ref{thm:heeg-vol} is concerned with closed (hence compact) 3-manifolds.
	\label{rem:thickthin_1}
	\item The supremum of all $\varepsilon_d$ for which the conclusion of Theorem \ref{thm:thickthin} holds is called the $d$-dimensional {\em Margulis constant.} As of now, the precise value of $\varepsilon_d$ remains unknown. For $d=3$, it is known that $0.104 \leq \varepsilon_3 \leq 0.616$, cf.\ \cite[p.\ 92]{purcell2020book}.
	\label{rem:thickthin_2}
	\item Theorem \ref{thm:thickthin} is a corollary of a more general result about discrete subgroups of Lie groups, called the Margulis Lemma, appeared in \cite{kazhdan1968selberg}, cf.\ \cite[Section 4.2]{martelli2016introduction} and \cite[Theorem 5.22]{purcell2020book}.
	\label{rem:thickthin_3}
\end{enumerate}
\end{remark}

\section{Combinatorial width parameters for 3-manifolds}
\label{sec:comb}

Two important topological invariants we have already discussed are the Heegaard genus and the volume. Here we introduce a simple scheme that can be used to turn any non-negative graph parameter into a topological invariant for compact 3-manifolds: Given a graph parameter $p\colon\mathscr{G}\rightarrow\mathbb{N}$, defined on the set $\mathscr{G}$ of finite (multi)graphs,  simply put
\begin{align}
	p(\manifold) = \min\{p(\dual(\tri)):\tri\text{~is~a~triangulation~of~}\manifold\}.
	\label{eq:scheme}
\end{align}

We call any 3-manifold invariant $p$ obtained this way a {\em combinatorial width parameter}. For reasons explained in the \nameref{sec:intro}, in what follows, we apply this scheme on two notable graph parameters that have been playing a central role in the development of parameterized algorithms \cite{cygan2015parameterized, downey1999parameterized, downey2013fundamentals} and in structural graph theory \cite{bodlaender1994tourist, bodlaender2005discovering, kawarabayashi2007graph, lovasz2006graph}.

\paragraph{Treewidth and pathwidth of a graph.} Introduced by Robertson and Seymour \cite{robertson1983graph, robertson1986graph}, the treewidth and pathwidth informally measure how tree-like or path-like a graph is. To precisely define them, we first need to talk about a {\em tree decomposition} of a graph $G=(V,E)$: it is a pair $\mathscr{T} = (\{B_i:i \in I\},\alttree=(I,F))$ with {\em bag}s $B_i \subseteq V$ and a tree $\alttree=(I,F)$, such that
\begin{enumerate}
	\item \label{twpropone} $\bigcup_{i \in I} B_i = V$,
	\item \label{twproptwo} for every arc $\{u,v\} \in E$, there exists $i \in I$ with $\{u,v\} \subseteq B_i$, and
	\item \label{twpropthree} for every node $v \in V$, $T_v = \{i \in I:v \in B_i\}$ spans a connected subtree of $\alttree$.
\end{enumerate}
See Figure \ref{fig:treewidth} for an illustration. The \textit{width} of a tree decomposition equals $\max_{i \in I}|B_i|-1$ and the \emph{treewidth} $\tw{G}$ is the smallest width of any tree decomposition of $G$, cf.\ Figure \ref{fig:tw-ex}.

A \emph{path decomposition} of  a graph $G$ is merely a tree decomposition for which the tree $\alttree$ is required to be a path. Similarly, the  \emph{pathwidth} $\pw{G}$ of a graph $G$ is the minimum width of any path decomposition of~$G$. From the definitions, $\tw{G}\leq\pw{G}$.

\begin{figure}[ht]
	\centering
	\includegraphics[scale=\MyFigScale]{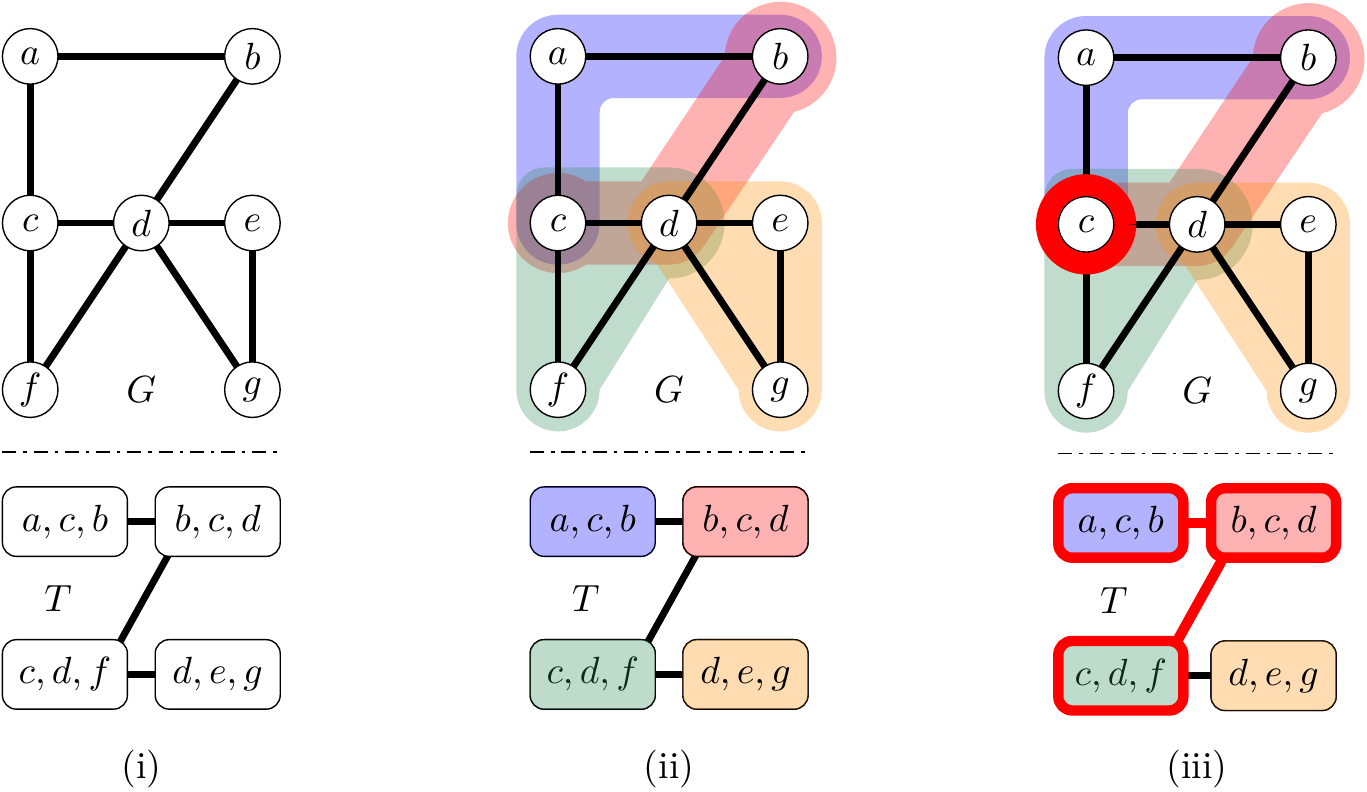}
	\caption{(i) A graph $G$ and a tree decomposition thereof, modeled on a tree $T$, of width 2. This tree decomposition also happens to be a path decomposition as $T$ is a path. (ii) Illustration of properties \ref{twpropone} and \ref{twproptwo} from the definition of a tree decomposition: The union of all bags equals $V$, and for every arc in $E$ there is a bag containing that arc. (iii) Illustration of property \ref{twpropthree}: The bags containing a given node (in this case $c$) span a connected subtree of $T$.}
	\label{fig:treewidth}
\end{figure}

\begin{figure}[!ht]
\centering
\includegraphics[scale=\MyFigScale]{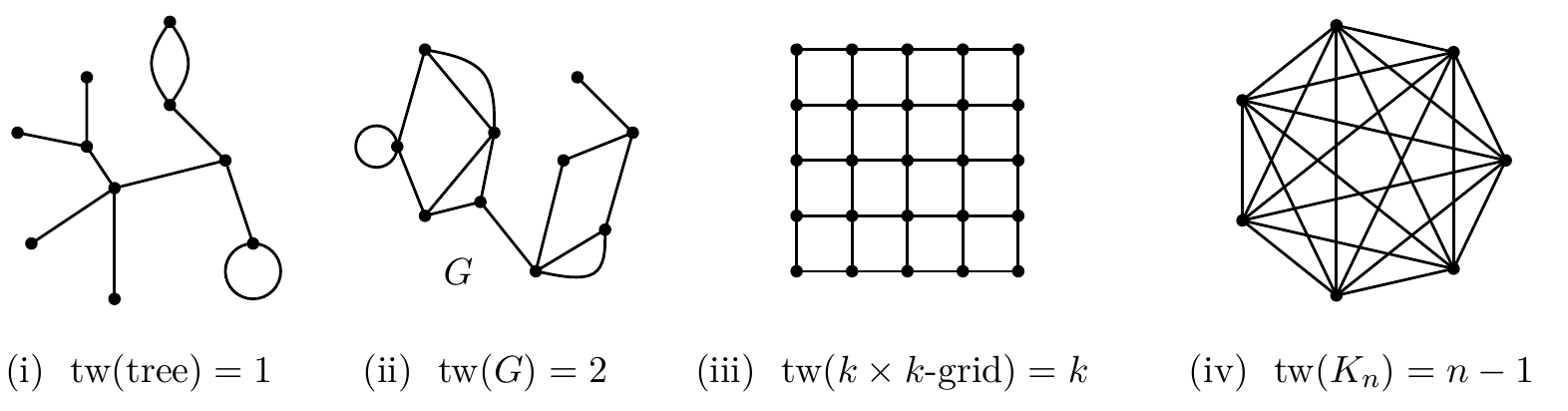}
\caption{(i) Graphs of treewidth one are precisely the trees (possibly with loops or multiarcs). (ii) A graph of treewidth two. (iii) The $k \times k$-grid (above $k =5$) has treewidth and pathwidth equal to $k$, thus planar graphs can have arbitrary large treewidth. (iv) The complete graph $K_n$ has treewidth $n-1$.}
\label{fig:tw-ex}
\end{figure}

\paragraph{Treewidth and pathwidth of a 3-manifold.} Having defined the treewidth and the pathwidth of a graph, based on \eqref{eq:scheme} it is immediate to extend these notions to 3-manifolds as
\begin{align}
	\begin{split}
		\tw{\manifold} &= \min\{\tw{\dual(\tri)}:\tri\text{~is~a~triangulation~of~}\manifold\}\text{,~and}\\
		\pw{\manifold} &= \min\{\pw{\dual(\tri)}:\tri\text{~is~a~triangulation~of~}\manifold\}.
	\end{split}
	\label{eq:tw-pw-mfd}
\end{align}
As $\tw{G}\leq\pw{G}$ for any graph $G$, we also have $\tw{\manifold}\leq\pw{\manifold}$ for any 3-manifold $\manifold$.

Recently, the quantitative relationship between these (and related) parameters and other topological invariants has become the subject of intense research. In \cite[Theorem 4]{huszar2019treewidth} it was shown that, for any closed, irreducible, non-Haken 3-manifold $\manifold$, the treewidth $\tw{\manifold}$ is bounded below in terms of the Heegaard genus $\heeg{\manifold}$ by means of the following inequality:
\begin{align}
	\heeg{\manifold} \leq 18(\tw{\manifold} + 1).
	\label{eq:heeg-tw}
\end{align}
The proof of \eqref{eq:heeg-tw} relies on the theory of generalized Heegaard splittings (see \cite[Section 6]{huszar2019treewidth} for details) and, in combination with work of Agol \cite{agol2003small}, implies the existence of 3-manifolds with arbitrary large treewidth.\footnote{A similar result was obtained in \cite{mesmay2019treewidth}. Here the treewidth of a knot diagram was related to the {\em sphere number} of the underlying knot, giving the first examples of knots where any diagram has high treewidth.} In subsequent work \cite{huszar2019manifold} it was proven (based on the theory of layered triangulations \cite{jaco2006layered}) that a reverse inequality holds for all closed 3-manifolds.
\begin{theorem}
\label{thm:pw-heeg}
For every closed $3$-manifold $\manifold$ with treewidth $\tw{\manifold}$, pathwidth $\pw{\manifold}$, and Heegaard genus $\heeg{\manifold}$ we have
\begin{align}
	\tw{\manifold} \leq \pw{\manifold} \leq 4\heeg{\manifold} - 2.
	\label{eq:heeg-pw}
\end{align}
\end{theorem}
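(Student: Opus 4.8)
The bound $\tw{\manifold}\le\pw{\manifold}$ is immediate from \eqref{eq:tw-pw-mfd}, since a path decomposition is in particular a tree decomposition, so only the second inequality requires work. Following \cite{huszar2019manifold}, the plan is to construct, for every closed $3$-manifold $\manifold$ with $g:=\heeg{\manifold}$, a single triangulation $\tri$ whose dual graph $\dual(\tri)$ carries a path decomposition of width at most $4g-2$; the triangulation is obtained from a minimal-genus Heegaard splitting $\manifold=\hbody\cup_\surface\hbody'$ by the method of \emph{layered triangulations} \cite{jaco2006layered}.

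First we would fix a one-vertex triangulation of the splitting surface $\surface$. A short Euler characteristic computation (using $V=1$ and $3F=2E$) gives $E=6g-3$ and $F=4g-2$, and $4g-2$ is the least possible number of triangles in any triangulation of a closed surface of genus $g\ge 1$; this number is what will bound the width. Next we would triangulate the two handlebodies $\hbody$ and $\hbody'$ by layered triangulations whose restrictions to the boundary both equal this one-vertex triangulation of $\surface$, matched up under the Heegaard gluing map. Such a triangulation is built from the boundary inwards by a sequence of \emph{layerings} --- attaching a tetrahedron along two adjacent triangles of the current frontier, an operation that performs a diagonal flip of the frontier triangulation and hence preserves its number of triangles --- together with \emph{folds} that close off a handle and strictly decrease the number of frontier triangles, ending at a small core. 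That the two boundary triangulations can be made to agree under the Heegaard gluing (or, alternatively, be bridged by a layered triangulation of $\surface\times[0,1]$) relies on the classical fact that any two one-vertex triangulations of a fixed closed surface are connected by a finite sequence of diagonal flips.

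The heart of the matter is then a path decomposition of $\dual(\tri)$ obtained by sweeping through $\manifold$ in the order prescribed by the layerings: starting from the core of $\hbody$, moving outwards to $\surface$, and then inwards through $\hbody'$ to its core. At every stage of the sweep the triangulated \emph{frontier surface} separating the already-placed tetrahedra from the rest is a triangulation of $\surface$ (or of one of the surfaces produced by the closing-off folds) carrying at most $4g-2$ triangles, since the count starts at $4g-2$ on $\surface$ and layerings preserve it while folds decrease it. Each frontier triangle lies on exactly one already-placed tetrahedron, so at most $4g-2$ tetrahedra are ``active'' at any stage; taking the $k$-th bag to consist of the active tetrahedra together with the next tetrahedron to be placed produces bags of size at most $4g-1$, hence width at most $4g-2$. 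It remains to verify the three path-decomposition axioms: every tetrahedron enters a bag when it is placed; a tetrahedron stays active, hence occupies a contiguous block of bags, exactly until the last of its faces is glued on the already-placed side; and each face gluing is witnessed by a bag because, at the instant it is realized in the sweep, both of the tetrahedra involved are incident to the current frontier surface.

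The step we expect to be the main obstacle is the construction and the frontier analysis of the layered triangulations of the handlebodies: producing a suitable core for the genus-$g$ handlebody and, above all, organizing the layering-and-folding so that the frontier never carries more than $4g-2$ triangles throughout the sweep, including within the handlebody interiors and across the gluing region. This is exactly the point at which the Jaco--Rubinstein theory of layered triangulations \cite{jaco2006layered} is indispensable; the flip-connectivity of one-vertex surface triangulations, needed for the boundary-matching, is a secondary technical ingredient. (The inequality is substantive only for $g\ge 1$; the sole closed $3$-manifold with $g=0$ is $\nsphere{3}$, which already has a two-tetrahedron triangulation and hence pathwidth at most $1$.)
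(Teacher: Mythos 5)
Your proposal is correct and follows essentially the same route as the paper, which does not reprove the second inequality but attributes it to \cite{huszar2019manifold} (cf.\ \cite{huszar2020combinatorial}), where it is established exactly as you describe: a one-vertex triangulation of a minimal-genus splitting surface with $4\heeg{\manifold}-2$ triangles, layered (Jaco--Rubinstein) triangulations of the two handlebodies matched across the gluing by a flip sequence, and a sweep whose frontier surfaces bound the bag sizes, yielding pathwidth at most $4\heeg{\manifold}-2$. The only caveat, shared with the paper's own statement, is the degenerate genus-zero case $\nsphere{3}$, for which the bound should be read as vacuous or handled separately.
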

Here the first inequality follows from the definitions of pathwidth and treewidth (see above); for details about the second one, we refer to \cite[Theorems 1.6 and 2.4]{huszar2020combinatorial} and \cite[Chapter 5]{huszar2020combinatorial}, where further refinements of these results are discussed along with related work by others.

\section{The proof of Theorem 1}
\label{sec:proof}

In this section we put together the ingredients discussed before, in order to prove that the volume of a closed hyperbolic 3-manifold provides a linear upper bound on its pathwidth.

The proof of Theorem \ref{thm:pw-vol} rests on Theorem \ref{thm:heeg-vol}, a folklore result according to which the Heegaard genus of a closed, orientable, hyperbolic 3-manifold can be upper-bounded in terms of its volume (see, e.g., \cite[p.\ 336--337]{shalen2007hyperbolic} or \cite{purcell2017independence}).

\begin{theorem}
\label{thm:heeg-vol}
There exists a universal constant $C''>0$ such that, for any closed, orientable, hyperbolic 3-manifold $\manifold$ with Heegaard genus $\heeg{\manifold}$ and volume $\vol{\manifold}$, we have
\begin{align}
	\heeg{\manifold} \leq C'' \cdot \vol{\manifold}.
	\label{eq:heeg-vol}
\vspace{-\topsep}
\end{align}
\end{theorem}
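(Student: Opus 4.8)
The plan is to bound the Heegaard genus $\heeg{\manifold}$ linearly in terms of $\vol{\manifold}$ by producing an explicit Heegaard splitting whose genus we can control. The natural starting point is the thick-thin decomposition $\mathscr{D}$ of $\manifold$ from Theorem~\ref{thm:thickthin} (with the remark that, $\manifold$ being closed, the thin part consists only of solid-torus tubes around short geodesics, cf.\ Remark~\ref{rem:thickthin}\eqref{rem:thickthin_1}). First I would recall the two quantitative facts that make the thick part manageable: by the J{\o}rgensen--Thurston theory there is a constant $K$ with (number of components and) total volume of the thick part bounded, and---more to the point---the $\varepsilon_3$-thick part $\manifold_{[\varepsilon_3,\infty)}$, being a compact hyperbolic manifold with bounded volume and bounded-geometry boundary, admits a triangulation with $O(\vol{\manifold})$ tetrahedra (this is the Kobayashi--Rieck input alluded to in the proof outline of Theorem~\ref{thm:pw-vol}). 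A triangulation with $N$ tetrahedra has a Heegaard splitting of genus $\le N+1$ via Example~\ref{ex:heegaard} (thicken the $1$-skeleton), so the thick part carries a Heegaard splitting---in the compression-body sense compatible with $\partial(\text{thick part})$---of genus $O(\vol{\manifold})$.

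Next I would account for the thin part. Each thin piece is a solid torus $\nsphere{1}\times\twodisk^2$, which has a genus-$1$ Heegaard splitting (indeed it is a handlebody, hence trivially a compression body with $\partial_-$ a torus and $\partial_+$ a genus-$1$ surface, or one can split the obvious way). The number $t$ of such tubes is itself bounded: a Margulis tube contributes a definite amount of volume, so $t \le c\cdot\vol{\manifold}$ for a universal constant $c$ (this is standard, e.g.\ each tube of the thin part has volume bounded below). Assembling these piece-by-piece Heegaard splittings along the tori $T_1,\dots,T_t$ where thick meets thin gives a \emph{generalized} Heegaard splitting $\manifold=\bigcup_{i\in I}(\compbodyone_i\cup_{\surface_i}\compbodytwo_i)$ whose dual graph $\dual(\mathscr{D})$ has $|I| = O(\vol{\manifold})$ nodes and $|E| = O(\vol{\manifold})$ arcs, so $1-\chi(\dual(\mathscr{D})) = |E|-|I|+1 = O(\vol{\manifold})$.

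Now I would amalgamate using Theorem~\ref{thm:amalgamation}: the resulting classical Heegaard surface $\surface$ has genus
\[
	g(\surface) = \sum_{i\in I}g(\surface_i) \;-\; \sum_{e\in E}g(\altaltsurface_e) \;+\; 1 \;-\; \chi(\dual(\mathscr{D})).
\]
Each $\altaltsurface_e$ is one of the gluing tori, so $g(\altaltsurface_e)=1\ge 0$, and dropping that (nonnegative) sum only increases the bound; the $g(\surface_i)$ over thin pieces contribute $O(t)=O(\vol{\manifold})$, the $g(\surface_i)$ over thick pieces contribute $O(\vol{\manifold})$ by the previous paragraph, and the Euler-characteristic term is $O(\vol{\manifold})$. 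Hence $\heeg{\manifold}\le g(\surface)=O(\vol{\manifold})$, which is \eqref{eq:heeg-vol}. I expect the main obstacle to be the bookkeeping around the thick part: one must cite the correct form of the J{\o}rgensen--Thurston / Kobayashi--Rieck statement that gives a triangulation (or directly a Heegaard splitting, or even just a spine) of the thick part with size linear in $\vol{\manifold}$, \emph{and} verify that this splitting is compatible with the torus boundary partition so that the generalized Heegaard splitting is legitimately assembled; everything after that is the (essentially formal) amalgamation count. A secondary subtlety is making sure the thin tubes are glued in on the correct side ($\partial_-$ of the adjacent thick compression body), but since we get to choose the ordering $\ell$ on $I$ freely, this is not a real constraint.
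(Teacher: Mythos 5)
Your proposal follows essentially the same route as the paper's proof: thick--thin decomposition, the J{\o}rgensen--Thurston/Kobayashi--Rieck triangulation of (a neighborhood of) the thick part giving a boundary-compatible Heegaard splitting of genus $O(\vol{\manifold})$, genus-one splittings of the solid-torus thin pieces, assembly into a generalized Heegaard splitting, and amalgamation via Theorem~\ref{thm:amalgamation}. The one justification to tighten is your count of tubes: an $\varepsilon$-thin tube whose core geodesic has length close to $\varepsilon$ can have arbitrarily small volume, so rather than ``each tube contributes definite volume,'' bound the number of boundary tori by the number of tetrahedra in the triangulation of the thick part (after a barycentric subdivision, each tetrahedron meets at most one boundary component), which is exactly how the paper argues.
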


\begin{proof}[Proof of Theorem \ref{thm:pw-vol} assuming Theorem \ref{thm:heeg-vol}]
By combining \eqref{eq:heeg-vol} with the second inequality in \eqref{eq:heeg-pw} the statement of Theorem \ref{thm:pw-vol} is readily deduced.
\end{proof}

We are left with proving Theorem \ref{thm:heeg-vol}. As we were unable to locate a proof of this result in the literature, below we give a proof ourselves.

\begin{proof}[Proof of Theorem \ref{thm:heeg-vol}]
First, we describe the general strategy. Given a closed, orientable, hyperbolic 3-manifold $\manifold$, we start by taking a thick-thin decomposition of $\manifold$. By a theorem that goes back to J{\o}rgensen and Thurston, the thick part can always be triangulated using $O(\vol{\manifold})$ tetrahedra.
Next, we show that such a triangulation of the thick part lends itself to a generalized Heegaard splitting of $\manifold$, where the sum of genera of the Heegaard surfaces is $O(\vol{\manifold})$. In the final step, we amalgamate this generalized Heegaard splitting into a classical Heegaard splitting of $\manifold$, and show that its genus is $O(\vol{\manifold})$.

\bigskip

We now elaborate on the details. First, we invoke the aforementioned theorem by J{\o}rgensen--Thurston, carefully proved by Kobayashi--Rieck. To precisely state this result, let us define the {\em (closed) $\delta$-neighborhood} $\ov{N_\delta}\left(\mathcal{X}\right)$ of a subset $\mathcal{X}$ of a Riemannian 3-manifold $\manifold$ to be the set of those points in $\manifold$ that have distance at most $\delta$ from some point in $\mathcal{X}$.

\medskip

\begin{theorem}[J{\o}rgensen--Thurston {\cite[\S 5.11]{thurston2002geometry}}, Kobayashi--Rieck \cite{kobayashi2011linear}]
Let $\varepsilon \in (0,\varepsilon_3]$, where $\varepsilon_3$ is the Margulis constant in dimension three {\em (cf.\ Remark \ref{rem:thickthin}/\ref{rem:thickthin_2})}.
\begin{enumerate}
	\item For any $\delta > 0$, there exists a constant $K > 0$, depending on $\varepsilon$ and $\delta$, so that for any finite-volume hyperbolic $3$-manifold $\manifold$, the $\delta$-neighborhood $\ov{N_\delta}\left(\manifold_{[\varepsilon,\infty)}\right) \subset \manifold$ of the thick part $\manifold_{[\varepsilon,\infty)}$ admits a triangulation with at most $K\cdot\vol{\manifold}$ tetrahedra. \label{thm:jorgensen-thurston_1}
	\item Moreover, $\ov{N_\delta}\left(\manifold_{[\varepsilon,\infty)}\right)$ is obtained from $\manifold$ by removing open tubular neighborhoods around short geodesics, and truncating cusps {\em \cite[Proposition 1.2]{kobayashi2011linear}}.\label{thm:jorgensen-thurston_2}
\end{enumerate}
\label{thm:jorgensen-thurston}
\end{theorem}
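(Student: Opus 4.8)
The plan is to establish this as an instance of the principle that \emph{bounded local geometry forces bounded combinatorial complexity}, carried out through a packing/net argument. Write $X = \ov{N_\delta}\left(\manifold_{[\varepsilon,\infty)}\right)$. Two facts drive the proof: (a) there is a uniform constant $\mu = \mu(\varepsilon,\delta) > 0$ with $\inj_x(\manifold) \geq \mu$ for every $x \in X$; and (b) $X$ is obtained from $\manifold$ by deleting open tubular neighborhoods of the short geodesics and truncating the cusps, so that $\partial X$ is a disjoint union of tori of bounded geometry. It suffices to treat $\delta$ below a threshold depending on $\varepsilon$, which is the regime relevant to our application; the general case follows by the same method once one also invokes the radius estimates for Margulis tubes.

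For (a): the function $x \mapsto \inj_x(\manifold)$ is $1$-Lipschitz, so when $\delta < \varepsilon/2$ every point of $X$ lying in the thin part is within distance $\delta$ of the thick part and hence satisfies $\inj_x(\manifold) \geq \varepsilon/2 - \delta$, while points of the thick part satisfy $\inj \geq \varepsilon/2$; thus $\mu := \varepsilon/2 - \delta > 0$ works. (For larger $\delta$ one uses in addition that the radius of a Margulis tube around a closed geodesic of length $\ell$ tends to $\infty$ as $\ell \to 0$, so a $\delta$-collar entering a tube from the thick side reaches its core geodesic only when $\ell$ is bounded below in terms of $\delta$; on such cores the injectivity radius is then bounded below, and $X$ meets each cusp in a bounded collar.) Fact (b) is read off directly from the thick-thin decomposition of Theorem~\ref{thm:thickthin}: forming $\ov{N_\delta}$ of the thick part pushes a product collar $\torus^2 \times [0,1]$ from the thick side into each solid-torus Margulis tube and into each cusp, and gluing this collar back onto the thick part merely shrinks the excised open tube and moves the cusp cross-section $\delta$ deeper in; this is \cite[Proposition 1.2]{kobayashi2011linear}.

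The volume bound on the number of tetrahedra is now a packing argument. Take a maximal subset $P = \{x_1,\dots,x_N\} \subset X$ whose points are pairwise at distance greater than $\mu/8$. The metric balls $B(x_i,\mu/16)$ are embedded and pairwise disjoint, and each has the volume of a radius-$\mu/16$ ball in $\mathbb{H}^3$ (curvature $-1$), so $N \cdot v_0 \leq \vol{X} \leq \vol{\manifold}$ with $v_0 = v_0(\mu) > 0$; hence $N = O(\vol{\manifold})$. By maximality the balls $B(x_i,\mu/8)$ cover $X$, and a second packing estimate shows each of them meets at most $D = D(\mu)$ of the others. Consequently the Voronoi decomposition of $X$ associated to $P$ — adapted near $\partial X$ by also choosing a $\mu$-net of the boundary tori, which have bounded geometry by (b) — has cells of diameter $O(\mu)$ with at most $D$ facets each; dualizing this cell decomposition and passing to its barycentric subdivision produces a piecewise-linear triangulation of $X$ with $O(N) = O(\vol{\manifold})$ tetrahedra, as required.

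The step I expect to be the main obstacle is the last one: checking that this Delaunay-type construction in a bounded-geometry \emph{manifold with boundary} really is a triangulation of $X$ (and not merely a complex homotopy equivalent to it), that the Voronoi cells behave well along $\partial X$, and that the linear bound on the number of top-dimensional simplices genuinely survives dualization and barycentric subdivision. This is precisely the technical content that J{\o}rgensen and Thurston stated and that Kobayashi--Rieck \cite{kobayashi2011linear} verify with care, and it is where any honest write-up must do real work.
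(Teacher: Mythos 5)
The paper does not actually prove this theorem: it is imported verbatim, with part 1 attributed to Thurston's notes as worked out by Kobayashi--Rieck, and part 2 cited as \cite[Proposition 1.2]{kobayashi2011linear}. So there is no internal proof to compare against; what you have written is a reconstruction of the argument in the cited sources, and as such it is the right argument. Your outline --- a uniform lower bound $\mu(\varepsilon,\delta)$ on the injectivity radius over $\ov{N_\delta}\left(\manifold_{[\varepsilon,\infty)}\right)$ via $1$-Lipschitzness of $\inj$ (plus tube-radius estimates for large $\delta$), then a maximal $\mu/8$-separated net whose cardinality is $O(\vol{\manifold})$ by disjointness of embedded $\mu/16$-balls, then a bounded-multiplicity Voronoi/Delaunay decomposition that is dualized and subdivided --- is exactly the J{\o}rgensen--Thurston scheme that Kobayashi--Rieck make rigorous. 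Two small points of hygiene: the disjoint balls $B(x_i,\mu/16)$ are centered in $X$ but need not lie in $X$, so the packing inequality should be stated directly as $N\cdot v_0\leq\vol{\manifold}$ rather than routed through $\vol{X}$; and ``bounded geometry'' of the boundary tori should mean a lower injectivity-radius bound (their areas are only $O(\vol{\manifold})$, not uniformly bounded), which is what the boundary net count actually uses. The one place where your write-up is a proof outline rather than a proof is the step you yourself flag: verifying that the Voronoi cells are balls meeting along balls (including along $\partial X$) so that the dual complex genuinely triangulates $X$ with a linearly bounded simplex count. Deferring that to \cite{kobayashi2011linear} is consistent with how the paper itself treats the theorem, but it means the hard technical content is cited, not reproved.
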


Now we fix an $\varepsilon \in (0,\varepsilon_3]$ and some $\delta > 0$. Let $\manifold$ be a closed hyperbolic 3-manifold. 
In the work of Maria--Purcell \cite{maria2019treewidth}, Theorem \ref{thm:jorgensen-thurston} plays a crucial role in ensuring the treewidth of $\ov{N_\delta}\left(\manifold_{[\varepsilon,\infty)}\right)$ to be upper-bounded by a linear function of $\vol{\manifold}$, and that $\partial\ov{N_\delta}\left(\manifold_{[\varepsilon,\infty)}\right)$ can be filled with solid tori.
For proving Theorem \ref{thm:heeg-vol}, we utilize Theorem \ref{thm:jorgensen-thurston} differently:

\medskip

\begin{proposition}
Let $\mathcal{Y} = \ov{N_\delta}\left(\manifold_{[\varepsilon,\infty)}\right)$ as defined above. The following are true.
\begin{enumerate}[label=(\alph*)]
\itemsep0em
\item For the Heegaard genus of $\mathcal{Y}$ we have $\heeg{\mathcal{Y}} = O(\vol{\manifold})$. \label{cor:heeg_thick_1}
\item $\mathcal{Y}$ has $O(\vol{\manifold})$ boundary components, each of which are tori. \label{cor:heeg_thick_2}
\end{enumerate}
\label{cor:heeg_thick}
\end{proposition}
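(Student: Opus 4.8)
The plan is to extract both statements directly from Theorem~\ref{thm:jorgensen-thurston}, which is essentially the only nontrivial input needed; the rest is bookkeeping about triangulations.

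For part~\ref{cor:heeg_thick_1}, I would first apply Theorem~\ref{thm:jorgensen-thurston}(\ref{thm:jorgensen-thurston_1}) to fix a triangulation $\tri$ of $\mathcal{Y}$ with $N \leq K \cdot \vol{\manifold}$ tetrahedra, where $K = K(\varepsilon,\delta)$ is the constant furnished there. Note that $\mathcal{Y}$ is connected: by Theorem~\ref{thm:jorgensen-thurston}(\ref{thm:jorgensen-thurston_2}) it is the complement in the connected closed manifold $\manifold$ of finitely many pairwise disjoint open solid tori (there are no cusps, by Remark~\ref{rem:thickthin}/\ref{rem:thickthin_1}). Hence the dual graph $\Gamma(\tri)$ is connected, has $N$ nodes and at most $2N$ arcs, and its regular neighborhood is a handlebody $\compbodytwo$ whose genus equals the first Betti number of $\Gamma(\tri)$, which is at most $2N - N + 1 = N + 1$. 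Running the construction of Example~\ref{ex:heegaard} in the form extended to manifolds with boundary (Example~\ref{ex:heegaard_boundary}), with respect to the partition $\partial_1\mathcal{Y} = \partial\mathcal{Y}$, $\partial_2\mathcal{Y} = \emptyset$, the complement $\compbodyone = \overline{\mathcal{Y}\setminus\compbodytwo}$ is a compression body with $\partial_-\compbodyone = \partial\mathcal{Y}$, and $\mathcal{Y} = \compbodyone \cup_\surface \compbodytwo$ is a Heegaard splitting of $\mathcal{Y}$ compatible with this partition, whose splitting surface $\surface = \partial\compbodytwo$ has genus at most $N+1$. Therefore $\heeg{\mathcal{Y}} \leq N + 1 \leq K\cdot\vol{\manifold} + 1 = O(\vol{\manifold})$.

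For part~\ref{cor:heeg_thick_2}, I would again invoke Theorem~\ref{thm:jorgensen-thurston}(\ref{thm:jorgensen-thurston_2}): $\mathcal{Y}$ is obtained from $\manifold$ by deleting open tubular neighborhoods of the short geodesics and truncating cusps, and since $\manifold$ is closed there are no cusps and each of these tubular neighborhoods is an open solid torus (Remark~\ref{rem:thickthin}/\ref{rem:thickthin_1}). Deleting finitely many pairwise disjoint open solid tori from a closed $3$-manifold creates exactly one new torus boundary component per deleted tube, so $\partial\mathcal{Y}$ is a disjoint union of tori, one for each short geodesic of $\manifold$. To bound their number I would count triangles on the boundary: the triangulation $\tri$ from part~\ref{cor:heeg_thick_1} induces a structure on the closed surface $\partial\mathcal{Y}$ with at most $4N$ triangles (each of the $N$ tetrahedra contributes at most four boundary faces) in which every edge bounds exactly two triangles; since a torus component carrying $F$ triangles has $3F/2$ edges and hence, by $V - E + F = 0$, exactly $V = F/2 \geq 1$ vertices, it has $F \geq 2$ triangles. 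Thus $\partial\mathcal{Y}$ has at most $2N \leq 2K\cdot\vol{\manifold}$ components, all tori.

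The technical crux is part~\ref{cor:heeg_thick_1}: it relies on the boundary version of the standard ``$1$-skeleton versus dual graph'' Heegaard splitting of a triangulated $3$-manifold (Example~\ref{ex:heegaard_boundary}) together with the observation that the genus of the resulting splitting surface equals the first Betti number of the dual graph, hence is linear in the number of tetrahedra. A minor point still to verify is that the triangulation supplied by Theorem~\ref{thm:jorgensen-thurston}(\ref{thm:jorgensen-thurston_1}) is an honest triangulation of the compact manifold $\mathcal{Y}$ with material (torus) boundary, so that both the splitting construction and the triangle count apply verbatim.
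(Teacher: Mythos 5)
Your proposal is correct and follows essentially the same route as the paper: for part \ref{cor:heeg_thick_1} you do exactly what the paper does --- triangulate $\mathcal{Y}$ with $O(\vol{\manifold})$ tetrahedra via Theorem \ref{thm:jorgensen-thurston} and feed this into the splitting construction of Example \ref{ex:heegaard_boundary} (Scharlemann's Theorem 2.1.11), your Betti-number computation for the dual graph merely making explicit the paper's ``by construction, the genus of this splitting is $O(\vol{\manifold})$''. The only (harmless) divergence is in part \ref{cor:heeg_thick_2}, where the paper bounds the number of boundary components by passing to a barycentric subdivision so that each tetrahedron meets at most one boundary component, while you count the at most $4N$ boundary triangles and use $\chi(\torus^2)=0$ to get at least two triangles per torus; both yield the same linear bound, and the ``tori'' claim comes from Theorem \ref{thm:jorgensen-thurston}/\ref{thm:jorgensen-thurston_2} in either case.
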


\renewcommand{\qedsymbol}{\ensuremath{\vartriangleleft}}

\begin{proof}[Proof of Proposition \ref{cor:heeg_thick}]
To establish \ref{cor:heeg_thick_1}, first consider a triangulation $\tri$ of $\mathcal{Y}$ with $O(\vol{\manifold})$ tetrahedra. Such a triangulation is guaranteed to exist by Theorem \ref{thm:jorgensen-thurston}/\ref{thm:jorgensen-thurston_1}. Fix an arbitrary partition $\mathscr{P} = \{\partial_1\mathcal{Y}, \partial_2\mathcal{Y}\}$ of the boundary components of $\mathcal{Y}$ (the trivial partition, i.e., $\partial_1\mathcal{Y} = \emptyset$, $\partial_2\mathcal{Y} = \partial\mathcal{Y}$, is also allowed). Follow a procedure similar to \cite[Theorem 2.1.11]{scharlemann2016lecture} to obtain a Heegaard splitting of $\mathcal{Y}$ compatible with $\mathscr{P}$. (For more details, see Example \ref{ex:heegaard_boundary} in Appendix \ref{app:heegaard}.) By construction, the genus of this splitting is $O(\vol{\manifold})$, hence, for the Heegaard genus of $\mathcal{Y}$, we have $\heeg{\mathcal{Y}} = O(\vol{\manifold})$. 
For the first part of \ref{cor:heeg_thick_2}, observe that, by passing to a first barycentric subdivision, we may assume a tetrahedron can contribute triangles to at most one boundary component. The second part of \ref{cor:heeg_thick_2} follows from Theorem \ref{thm:jorgensen-thurston}/\ref{thm:jorgensen-thurston_2}.
\end{proof}

\renewcommand{\qedsymbol}{\ensuremath{\square}}

As discussed in Section \ref{ssec:gen_heegaard}, any decomposition $\manifold = \bigcup_{i \in I}\manifold_i$ of a 3-manifold $\manifold$ into codimension zero submanifolds with pairwise disjoint interiors gives rise to generalized Heegaard splittings of $\manifold$. Because $\manifold$ is hyperbolic, this is also true for every thick-thin decomposition of $\manifold$. So let us proceed by taking a thick-thin decomposition
\begin{align}
	\mathscr{D} = \left\{ \manifold_i : i \in [m],~\textstyle\bigcup_{i =1}^m\manifold_i = \manifold,~\text{and}~\operatorname{int}(\manifold_i) \cap \operatorname{int}(\manifold_j) = \emptyset~\text{for}~i \neq j \right\}
	\label{eq:thickthin_proof}
\end{align}
of $\manifold$, where $\manifold_1 = \mathcal{Y} = \ov{N_\delta}\left(\manifold_{[\varepsilon,\infty)}\right)$ is the thick part and $\manifold_2\ldots,\manifold_m$ are the thin ones.

Note that, by Theorem \ref{thm:jorgensen-thurston}/\ref{thm:jorgensen-thurston_2}, each $\manifold_i$ ($2 \leq i \leq m$) is homeomorphic to a solid torus $\nsphere{1} \times \twodisk^2$, and $m = O(\vol{\manifold})$ by Proposition \ref{cor:heeg_thick}/\ref{cor:heeg_thick_2}. Let us label the nodes of $\Gamma(\mathscr{D})$ via the identity map. For each $i \in [m]$, we choose a Heegaard splitting $\manifold_i = \compbodyone_i \cup_{\surface_i}\compbodytwo_i$ of minimal genus compatible with this labeling. This gives a generalized Heegaard splitting $\manifold = \bigcup_{i \in I}(\compbodyone_i \cup_{\surface_i}\compbodytwo_i)$ of the hyperbolic 3-manifold $\manifold$. See Figure \ref{fig:thickthin_graph_splitting} for an illustration. 

\begin{figure}[!ht]
	\centering
	\includegraphics[scale=\MyFigScale]{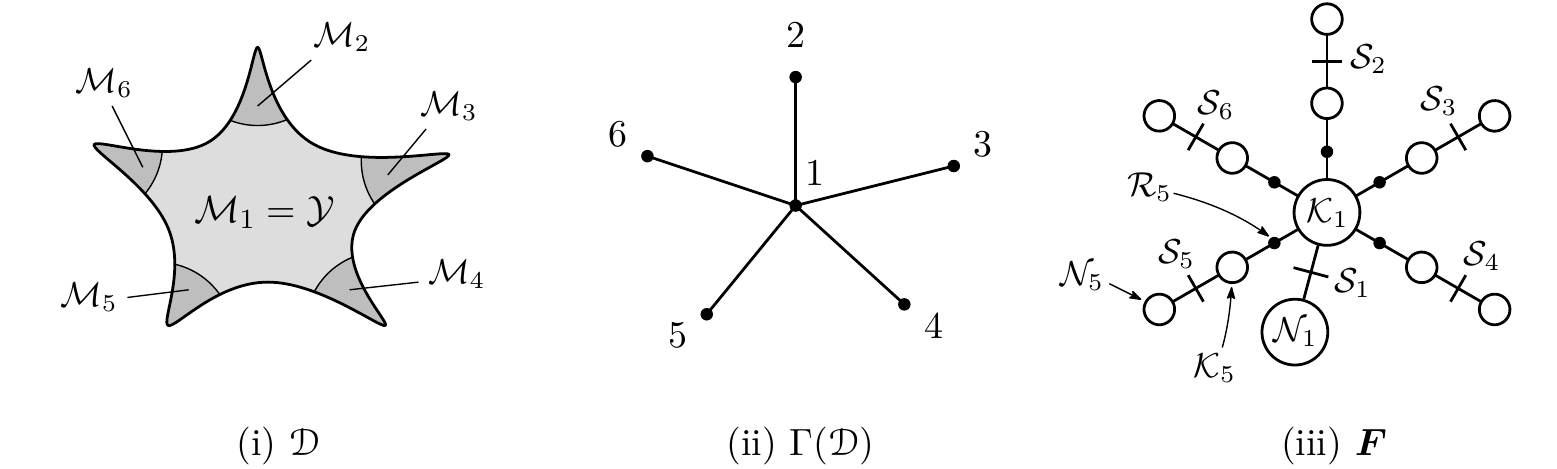}
	\caption{(i) Schematic example of a thick-thin decomposition $\mathscr{D}$ of a hyperbolic 3-manifold $\manifold$. (ii) The dual graph $\Gamma(\mathscr{D})$ of $\mathscr{D}$ with its nodes labeled via the identity map. (iii) The fork complex $\forkcomp$ of a generalized Heegaard splitting associated with $\mathscr{D}$ and the given labeling of $V(\Gamma(\mathscr{D}))$}
	\label{fig:thickthin_graph_splitting}	
\end{figure}
 
\begin{claim}
\label{claim:thickthin}
From the construction it directly follows that the generalized Heegaard splitting $\manifold = \bigcup_{i \in I}(\compbodyone_i \cup_{\surface_i}\compbodytwo_i)$ described above has the following properties:
\begin{enumerate}
	\item All the $\compbodyone_i$ are handlebodies. For $\compbodyone_1$ we have $g(\partial\compbodyone_1) = g(\surface_1) = O(\vol{\manifold})$.
	\item If $2 \leq i \leq m$, then $\compbodyone_i$ is a solid torus, therefore $g(\partial\compbodyone_i) = g(\surface_i) = 1$.
	\item $\compbodytwo_1$ is a compression body with $\partial_+\compbodytwo_1 = \surface_1$ and $\partial_-\compbodytwo_1 = \text{``disjoint union of $m$ tori.''}$
	\item If $2 \leq i \leq m$, then $\compbodytwo_i$ is a trivial compression body homeomorphic to $\torus^2 \times [0,1]$. For its boundary components we have $\partial_+\compbodytwo_i = \surface_i$ and $\partial_-\compbodytwo_i = \altaltsurface_i = \manifold_i \cap \manifold_1$.
	\item For the sum of the genera of the surfaces $\surface_i$ we have $\sum_{i=1}^m g(\surface_i) = O(\vol{\manifold})$. \hfill $\vartriangleleft$
\end{enumerate}
\end{claim}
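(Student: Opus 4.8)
The statement is flagged as an immediate consequence of the construction, so the plan is to verify its five items one by one, reading the structure of each piece's minimal-genus compatible Heegaard splitting $\manifold_i=\compbodyone_i\cup_{\surface_i}\compbodytwo_i$ off the definitions of Section~\ref{ssec:gen_heegaard} and invoking Proposition~\ref{cor:heeg_thick} for the two quantitative assertions. First I would record the features of the decomposition \eqref{eq:thickthin_proof} used throughout: by Theorem~\ref{thm:jorgensen-thurston}/\ref{thm:jorgensen-thurston_2} every thin piece $\manifold_i$ ($2\le i\le m$) is a solid torus $\nsphere{1}\times\twodisk^2$, so it meets $\manifold_1=\mathcal{Y}$ along a single torus $\altaltsurface_i=\manifold_i\cap\manifold_1$; since $\manifold$ is closed, these tori are exactly the boundary components of $\mathcal{Y}$, and by Proposition~\ref{cor:heeg_thick}/\ref{cor:heeg_thick_2} there are $O(\vol{\manifold})$ of them, whence $m=O(\vol{\manifold})$.

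For the thick piece: node $1$ is minimal for the identity labeling, so in the compatible partition of $\partial\manifold_1$ all boundary tori are placed on one side. Hence one of the two compression bodies of any compatible Heegaard splitting of $\manifold_1$ has empty lower boundary, so is a handlebody, while the other has all the boundary tori of $\mathcal{Y}$ as its lower boundary; calling the former $\compbodyone_1$ and the latter $\compbodytwo_1$ gives the handlebody part of item~1 and all of item~3 ($\partial_+\compbodytwo_1=\surface_1$, and $\partial_-\compbodytwo_1$ the disjoint union of the boundary tori of $\mathcal{Y}$). For the genus bound in item~1, observe that the construction inside the proof of Proposition~\ref{cor:heeg_thick}/\ref{cor:heeg_thick_1} applies to an \emph{arbitrary} prescribed partition of $\partial\mathcal{Y}$ and yields a compatible splitting of genus $O(\vol{\manifold})$; as $\surface_1$ was chosen of minimal genus among splittings compatible with our partition, $g(\surface_1)=O(\vol{\manifold})$, and $\partial\compbodyone_1=\surface_1$ since $\compbodyone_1$ is a handlebody.

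For a thin piece $\manifold_i$ ($2\le i\le m$), the solid torus $\manifold_i$ has as its entire boundary the single torus $\altaltsurface_i$, which the compatibility constraint places on a fixed side of the splitting. A compatible splitting is produced by taking a product collar $\altaltsurface_i\times[0,1]$ of $\altaltsurface_i$ inside $\manifold_i$: the collar is a trivial compression body $\cong\torus^2\times[0,1]$, and its complement is again a solid torus, glued to it along a torus. Since a torus in the lower boundary of a compression body forces the upper boundary (the splitting surface) to have genus at least $1$, this splitting has minimal genus and $g(\surface_i)=1$; naming the solid-torus side $\compbodyone_i$ (a genus-one handlebody, consistently with item~1, so $g(\partial\compbodyone_i)=g(\surface_i)=1$) and the collar $\compbodytwo_i\cong\torus^2\times[0,1]$ with $\partial_+\compbodytwo_i=\surface_i$, $\partial_-\compbodytwo_i=\altaltsurface_i$ gives items~2 and~4. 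Item~5 is then arithmetic: $\sum_{i=1}^m g(\surface_i)=g(\surface_1)+(m-1)=O(\vol{\manifold})$, using $m=O(\vol{\manifold})$. I expect no serious obstacle; the one point requiring care is the bookkeeping that matches the ordering-and-partition convention of Section~\ref{ssec:gen_heegaard} to these concrete pieces---so that the sides named $\compbodyone_i,\compbodytwo_i$ assemble into a bona fide generalized Heegaard splitting (grips to grips, tines to tines along the $\altaltsurface_i$)---together with the routine remark that the solid torus and $\torus^2\times[0,1]$ are the only genus-one compression bodies with a single torus as lower boundary, so minimality does pin down the product structure.
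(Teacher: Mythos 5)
Your proposal is correct and follows essentially the same route as the paper, which simply asserts the claim as a direct consequence of the construction (thin pieces are solid tori by Theorem~\ref{thm:jorgensen-thurston}/\ref{thm:jorgensen-thurston_2}, $m=O(\vol{\manifold})$ and $g(\surface_1)=O(\vol{\manifold})$ via Proposition~\ref{cor:heeg_thick}, collar-plus-core splittings of the solid tori); you merely spell out the details, including the useful observation that minimality of $g(\surface_1)$ among splittings compatible with the chosen partition is what lets Proposition~\ref{cor:heeg_thick}\ref{cor:heeg_thick_1} apply. Your count of $m-1$ boundary tori (rather than the claim's ``$m$'') is in fact the accurate bookkeeping and is immaterial to the $O(\vol{\manifold})$ bounds.
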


\bigskip

By Theorem \ref{thm:amalgamation}, we may amalgamate this to a classical Heegaard splitting $\manifold = \compbodyone \cup_{\surface} \compbodytwo$.
Finally, by combining the data from Claim \ref{claim:thickthin} with the formula \eqref{eq:amalgamation} in Theorem \ref{thm:amalgamation}, we get

\begin{align}
	g(\surface) &= \sum_{i \in I}g(\surface_i) - \sum_{e \in E}g(\altaltsurface_e)+1-\chi(\Gamma(\mathscr{D}))\\
	&= g(\surface_1) + \sum_{i=2}^m\left(g(\surface_i) - g(\altaltsurface_i)\right) + 1 -\chi(\Gamma(\mathscr{D}))\\
	&= O(\vol{\manifold}) + \sum_{i=2}^m\left(1 - 1\right) + 1 - 1 =  O(\vol{\manifold}).
\end{align}

\bigskip

This concludes the proof of Theorem \ref{thm:heeg-vol}.
\end{proof}

\section{Discussion}
\label{sec:discussion}

\paragraph{Pathwidth vs.\ treewidth vs.\ volume.} The inequalities \eqref{eq:tw-vol} and \eqref{eq:pw-vol} respectively provide information about the quantitative relationship between the treewidth and the volume, and the pathwidth and the volume of hyperbolic 3-manifolds. It is natural to study the sharpness of these inequalities both in absolute terms and also relative to each other.

In \cite[Section 6]{maria2019treewidth} Maria and Purcell show that, by performing appropriate {\em Dehn fillings} on hyperbolic {\em $2$-bridge knot exteriors}, one can obtain an infinite family of closed hyperbolic 3-manifolds with bounded treewidth, but unbounded volume. The bound on the treewidth is established through a construction of small-treewidth triangulations of these manifolds, based on the work of Sakuma--Weeks \cite{sakuma1995examples} on triangulating 2-bridge knot exteriors. 
It is not difficult to see that these triangulations have bounded pathwidth, too.

Regarding the comparison of \eqref{eq:tw-vol} and \eqref{eq:pw-vol}, recall that, from the definitions of pathwidth and treewidth (Section \ref{sec:comb}) if follows that $\tw{\manifold} \leq \pw{\manifold}$ for every 3-manifold $\manifold$. However, while there are examples for which both of these quantities are small \cite{huszar2019manifold} or arbitrary large \cite{huszar2019treewidth}, we do not know whether their {\em difference} can be arbitrary large. Thus we ask:

\begin{question}
\label{que:pw-tw}
Can the difference $\pw{\manifold} - \tw{\manifold}$ be arbitrary large?
\end{question}

\noindent
Note that, in case of graphs, trees can have arbitrary large pathwidth, e.g., the {\em complete binary tree} $T_h$ of height $h$ satisfies $\pw{T_h}=\lceil h/2 \rceil$, cf.\ \cite[Theorem 67]{bodlaender1998partial} or \cite[Lemma 2.8]{takahashi1994minimal}. 

\paragraph{Algorithmic aspects of small pathwidth.} In the \nameref{sec:intro} it was mentioned that input triangulations with small-pathwidth dual graphs can speed up algorithms that are fixed-parameter tractable (FPT) in the treewidth. Here we briefly elaborate on this.

A typical FPT-algorithm $\mathscr{A}$ exploits the small treewidth of the dual graph of its input triangulation by using a data structure called a {\em nice tree decomposition}. This is a particular kind of tree decomposition, whose bags can be grouped into three different types: {\em forget}, {\em introduce}, and {\em join} bags. A triangulation $\tri$ with $n$ tetrahedra and $\tw{\dual(\tri)} = k$ always admits a nice tree decomposition with at most $4n$ bags of width $k$, see \cite[Section 13.1]{kloks1994treewidth}.

Upon taking such a triangulation $\tri$ as input, the algorithm $\mathscr{A}$ would first construct a tree decomposition $\mathscr{T}$ of $\dual(\tri)$ of small width, turn $\mathscr{T}$ into a {\em nice} tree decomposition $\mathscr{T}_{\mathrm{nice}}$ (which is still of small width, as noted above), then parse $\mathscr{T}_{\mathrm{nice}}$ and perform its specific computation at each bag thereof. Depending on the problem to be solved, processing a {\em join} bag can be orders of magnitudes slower than processing an {\em introduce} or {\em forget} bag (we refer to \cite[Appendix C]{huszar2019treewidth} for more details, and to \cite[Section 4(d)]{burton2016parameterized} for a real-life example).

Now, if the decomposition $\mathscr{T}$ happens to be a path decomposition (i.e., a tree decomposition where the underlying tree is a path), then the procedure for constructing $\mathscr{T}_{\mathrm{nice}}$ results in a nice tree decomposition {\em without} join bags. Therefore, if the pathwidth $\pw{\dual(\tri)}$ is not ``much'' larger than the treewidth $\tw{\dual(\tri)}$, constructing a path decomposition (instead of an arbitrary tree decomposition) in the first step can potentially be very beneficial for the overall running time of $\mathscr{A}$, as the algorithm does not have to deal with join bags at all. 

\paragraph{Topological parameters for FPT-algorithms.} Algorithms that are FPT in the treewidth have been very successful in 3-dimensional topology. They all come with a caveat though: their fast execution presumes an input triangulation whose dual graph has small treewidth. However, in case the triangulation at hand has high treewidth, finding another triangulation of the same 3-manifold that has smaller treewidth might be very difficult.

To address this challenge, recently there has been a growing interest in researching algorithms that are FPT in {\em topological} parameters (e.g., the first Betti number \cite{maria2019polynomial}), that do not depend on the particular input triangulation, but only on the underlying 3-manifold.

Together with \cite{maria2019treewidth}, our work reinforces the potential of the volume of becoming a useful topological parameter for FPT-algorithms in the realm of hyperbolic 3-manifolds.

\bibliography{references}

\begin{thebibliography}{10}

\bibitem{agol2003small}
I.~Agol.
\newblock Small 3-manifolds of large genus.
\newblock {\em Geom. Dedicata}, 102:53--64, 2003.
\newblock \href {https://dx.doi.org/10.1023/B:GEOM.0000006584.85248.c5}
  {\path{doi:10.1023/B:GEOM.0000006584.85248.c5}}, \href
  {https://mathscinet.ams.org/mathscinet-getitem?mr=2026837}
  {\path{MR:2026837}}, \href {https://zbmath.org/?q=an:1039.57008}
  {\path{Zbl:1039.57008}}.

\bibitem{aschenbrenner2015manifold}
M.~Aschenbrenner, S.~Friedl, and H.~Wilton.
\newblock {\em 3-Manifold Groups}, volume~20 of {\em EMS Ser. Lect. Math.}
\newblock Eur. Math. Soc. (EMS), Z\"{u}rich, 2015.
\newblock \href {https://dx.doi.org/10.4171/154} {\path{doi:10.4171/154}},
  \href {https://mathscinet.ams.org/mathscinet-getitem?mr=3444187}
  {\path{MR:3444187}}, \href {https://zbmath.org/?q=an:1326.57001}
  {\path{Zbl:1326.57001}}.

\bibitem{bachman2017computing}
D.~Bachman, R.~Derby-Talbot, and E.~Sedgwick.
\newblock Computing {H}eegaard genus is {NP}-hard.
\newblock In {\em {A Journey Through Discrete Mathematics: A Tribute to
  Ji\v{r}\'i Matou\v{s}ek}}, pages 59--87. Springer, Cham, 2017.
\newblock \href {https://dx.doi.org/10.1007/978-3-319-44479-6}
  {\path{doi:10.1007/978-3-319-44479-6}}, \href
  {https://mathscinet.ams.org/mathscinet-getitem?mr=3726594}
  {\path{MR:3726594}}, \href {https://zbmath.org/?q=an:1388.57020}
  {\path{Zbl:1388.57020}}.

\bibitem{bing1959alternative}
R.~H. Bing.
\newblock An alternative proof that {$3$}-manifolds can be triangulated.
\newblock {\em Ann. Math. (2)}, 69:37--65, 1959.
\newblock \href {https://dx.doi.org/10.2307/1970092}
  {\path{doi:10.2307/1970092}}, \href
  {https://mathscinet.ams.org/mathscinet-getitem?mr=0100841}
  {\path{MR:0100841}}, \href {https://zbmath.org/?q=an:0106.16604}
  {\path{Zbl:0106.16604}}.

\bibitem{bodlaender1994tourist}
H.~L. Bodlaender.
\newblock A tourist guide through treewidth.
\newblock {\em Acta Cybern.}, 11(1-2):1--21, 1993.
\newblock
  \href{https://www.persistent-identifier.nl/urn:nbn:nl:ui:10-1874-2301}{\texttt{urn:nbn:nl:ui:10-1874-2301}}.
\newblock \href {https://mathscinet.ams.org/mathscinet-getitem?mr=1268488}
  {\path{MR:1268488}}, \href {https://zbmath.org/?q=an:0804.68101}
  {\path{Zbl:0804.68101}}.

\bibitem{bodlaender1998partial}
H.~L. Bodlaender.
\newblock A partial \emph{k}-arboretum of graphs with bounded treewidth.
\newblock {\em Theor. Comput. Sci.}, 209(1--2):1--45, 1998.
\newblock \href {https://dx.doi.org/10.1016/S0304-3975(97)00228-4}
  {\path{doi:10.1016/S0304-3975(97)00228-4}}, \href
  {https://mathscinet.ams.org/mathscinet-getitem?mr=1647486}
  {\path{MR:1647486}}, \href {https://zbmath.org/?q=an:0912.68148}
  {\path{Zbl:0912.68148}}.

\bibitem{bodlaender2005discovering}
H.~L. Bodlaender.
\newblock Discovering treewidth.
\newblock In {\em Proc. 31st Conf. Curr. Trends Theory Pract. Comput. Sci.
  ({SOFSEM} 2005)}, pages 1--16, 2005.
\newblock \href {https://dx.doi.org/10.1007/978-3-540-30577-4_1}
  {\path{doi:10.1007/978-3-540-30577-4_1}}, \href
  {https://zbmath.org/?q=an:1117.68451} {\path{Zbl:1117.68451}}.

\bibitem{burton2013regina}
B.~A. Burton.
\newblock Computational topology with {R}egina: algorithms, heuristics and
  implementations.
\newblock In {\em Geometry and Topology Down Under}, volume 597 of {\em
  Contemp. Math.}, pages 195--224. Amer. Math. Soc., Providence, RI, 2013.
\newblock \href {https://dx.doi.org/10.1090/conm/597/11877}
  {\path{doi:10.1090/conm/597/11877}}, \href
  {https://mathscinet.ams.org/mathscinet-getitem?mr=3186674}
  {\path{MR:3186674}}, \href {https://zbmath.org/?q=an:1279.57004}
  {\path{Zbl:1279.57004}}.

\bibitem{burton2014crushing}
B.~A. Burton.
\newblock A new approach to crushing 3-manifold triangulations.
\newblock {\em Discrete Comput. Geom.}, 52(1):116--139, 2014.
\newblock \href {https://dx.doi.org/10.1007/s00454-014-9572-y}
  {\path{doi:10.1007/s00454-014-9572-y}}, \href
  {https://mathscinet.ams.org/mathscinet-getitem?mr=3231034}
  {\path{MR:3231034}}, \href {https://zbmath.org/?q=an:1317.57012}
  {\path{Zbl:1317.57012}}.

\bibitem{burton2018homfly}
B.~A. Burton.
\newblock The {HOMFLY}-{PT} polynomial is fixed-parameter tractable.
\newblock In {\em 34th {I}nt. {S}ymp. {C}omput. {G}eom. ({SoCG} 2018)},
  volume~99 of {\em LIPIcs. Leibniz Int. Proc. Inform.}, pages 18:1--18:14.
  Schloss Dagstuhl--Leibniz-Zent. Inf., 2018.
\newblock \href {https://dx.doi.org/10.4230/LIPIcs.SoCG.2018.18}
  {\path{doi:10.4230/LIPIcs.SoCG.2018.18}}, \href
  {https://mathscinet.ams.org/mathscinet-getitem?mr=3824262}
  {\path{MR:3824262}}.

\bibitem{Regina}
B.~A. Burton, R.~Budney, W.~Pettersson, et~al.
\newblock Regina: Software for low-dimensional topology, 1999--2019.
\newblock Version 5.1.
\newblock URL: \url{https://regina-normal.github.io}.

\bibitem{burton2017courcelle}
B.~A. Burton and R.~G. Downey.
\newblock Courcelle's theorem for triangulations.
\newblock {\em J. Comb. Theory, Ser. {A}}, 146:264--294, 2017.
\newblock \href {https://dx.doi.org/10.1016/j.jcta.2016.10.001}
  {\path{doi:10.1016/j.jcta.2016.10.001}}, \href
  {https://mathscinet.ams.org/mathscinet-getitem?mr=3574232}
  {\path{MR:3574232}}, \href {https://zbmath.org/?q=an:1353.05122}
  {\path{Zbl:1353.05122}}.

\bibitem{burton2016parameterized}
B.~A. Burton, T.~Lewiner, J.~Paix{\~{a}}o, and J.~Spreer.
\newblock Parameterized complexity of discrete {M}orse theory.
\newblock {\em {ACM} Trans. Math. Softw.}, 42(1):6:1--6:24, 2016.
\newblock \href {https://dx.doi.org/10.1145/2738034}
  {\path{doi:10.1145/2738034}}, \href
  {https://mathscinet.ams.org/mathscinet-getitem?mr=3472422}
  {\path{MR:3472422}}, \href {https://zbmath.org/?q=an:1347.68165}
  {\path{Zbl:1347.68165}}.

\bibitem{burton2018algorithms}
B.~A. Burton, C.~Maria, and J.~Spreer.
\newblock Algorithms and complexity for {Turaev--Viro} invariants.
\newblock {\em J. Appl. Comput. Topol.}, 2(1--2):33--53, 2018.
\newblock \href {https://dx.doi.org/10.1007/s41468-018-0016-2}
  {\path{doi:10.1007/s41468-018-0016-2}}, \href
  {https://mathscinet.ams.org/mathscinet-getitem?mr=3873178}
  {\path{MR:3873178}}, \href {https://zbmath.org/?q=an:07089248}
  {\path{Zbl:07089248}}.

\bibitem{pettersson2014fixed}
B.~A. Burton and W.~Pettersson.
\newblock Fixed parameter tractable algorithms in combinatorial topology.
\newblock In {\em Proc. 20th Int. Conf. Comput. Comb. ({COCOON} 2014)}, pages
  300--311, 2014.
\newblock \href {https://dx.doi.org/10.1007/978-3-319-08783-2_26}
  {\path{doi:10.1007/978-3-319-08783-2_26}}, \href
  {https://mathscinet.ams.org/mathscinet-getitem?mr=3247596}
  {\path{MR:3247596}}, \href {https://zbmath.org/?q=an:1423.68205}
  {\path{Zbl:1423.68205}}.

\bibitem{burton2013complexity}
B.~A. Burton and J.~Spreer.
\newblock The complexity of detecting taut angle structures on triangulations.
\newblock In {\em Proc. 24th Annu. {ACM-SIAM} Symp. Discrete Algorithms ({SODA}
  2013)}, pages 168--183, 2013.
\newblock \href {https://dx.doi.org/10.1137/1.9781611973105.13}
  {\path{doi:10.1137/1.9781611973105.13}}, \href
  {https://mathscinet.ams.org/mathscinet-getitem?mr=3185388}
  {\path{MR:3185388}}, \href {https://zbmath.org/?q=an:1421.68161}
  {\path{Zbl:1421.68161}}.

\bibitem{cygan2015parameterized}
M.~Cygan, F.~V. Fomin, {\L}.~Kowalik, D.~Lokshtanov, D.~Marx, M.~Pilipczuk,
  M.~Pilipczuk, and S.~Saurabh.
\newblock {\em Parameterized Algorithms}.
\newblock Springer, Cham, 2015.
\newblock \href {https://dx.doi.org/10.1007/978-3-319-21275-3}
  {\path{doi:10.1007/978-3-319-21275-3}}, \href
  {https://mathscinet.ams.org/mathscinet-getitem?mr=3380745}
  {\path{MR:3380745}}, \href {https://zbmath.org/?q=an:1334.90001}
  {\path{Zbl:1334.90001}}.

\bibitem{mesmay2019treewidth}
A.~{de Mesmay}, J.~Purcell, S.~Schleimer, and E.~Sedgwick.
\newblock On the tree-width of knot diagrams.
\newblock {\em J. Comput. Geom.}, 10(1):164--180, 2019.
\newblock \href {https://dx.doi.org/10.20382/jocg.v10i1a6}
  {\path{doi:10.20382/jocg.v10i1a6}}, \href
  {https://mathscinet.ams.org/mathscinet-getitem?mr=3957223}
  {\path{MR:3957223}}, \href {https://zbmath.org/?q=an:1432.57017}
  {\path{Zbl:1432.57017}}.

\bibitem{derby-talbot2009stabilization}
R.~Derby-Talbot.
\newblock Stabilization, amalgamation and curves of intersection of {H}eegaard
  splittings.
\newblock {\em Algebr. Geom. Topol.}, 9(2):811--832, 2009.
\newblock \href {https://dx.doi.org/10.2140/agt.2009.9.811}
  {\path{doi:10.2140/agt.2009.9.811}}, \href
  {https://mathscinet.ams.org/mathscinet-getitem?mr=2505126}
  {\path{MR:2505126}}, \href {https://zbmath.org/?q=an:1176.57021}
  {\path{Zbl:1176.57021}}.

\bibitem{downey1999parameterized}
R.~G. Downey and M.~R. Fellows.
\newblock {\em Parameterized Complexity}.
\newblock Monogr. Comput. Sci. Springer-Verlag New York, 1999.
\newblock \href {https://dx.doi.org/10.1007/978-1-4612-0515-9}
  {\path{doi:10.1007/978-1-4612-0515-9}}, \href
  {https://mathscinet.ams.org/mathscinet-getitem?mr=1656112}
  {\path{MR:1656112}}, \href {https://zbmath.org/?q=an:0914.68076}
  {\path{Zbl:0914.68076}}.

\bibitem{downey2013fundamentals}
R.~G. Downey and M.~R. Fellows.
\newblock {\em Fundamentals of Parameterized Complexity}.
\newblock Texts Comput. Sci. Springer, London, 2013.
\newblock \href {https://dx.doi.org/10.1007/978-1-4471-5559-1}
  {\path{doi:10.1007/978-1-4471-5559-1}}, \href
  {https://mathscinet.ams.org/mathscinet-getitem?mr=3154461}
  {\path{MR:3154461}}, \href {https://zbmath.org/?q=an:1358.68006}
  {\path{Zbl:1358.68006}}.

\bibitem{heegaard1916analysis}
P.~Heegaard.
\newblock Sur l'"{A}nalysis situs".
\newblock {\em Bull. Soc. Math. France}, 44:161--242, 1916.
\newblock \href {https://dx.doi.org/10.24033/bsmf.968}
  {\path{doi:10.24033/bsmf.968}}, \href
  {https://mathscinet.ams.org/mathscinet-getitem?mr=1504754}
  {\path{MR:1504754}}.

\bibitem{hempel2004manifolds}
J.~Hempel.
\newblock {\em 3-Manifolds}.
\newblock AMS Chelsea Publ., Providence, RI, 2004.
\newblock Reprint of the 1976 original.
\newblock \href {https://dx.doi.org/10.1090/chel/349}
  {\path{doi:10.1090/chel/349}}, \href
  {https://mathscinet.ams.org/mathscinet-getitem?mr=2098385}
  {\path{MR:2098385}}, \href {https://zbmath.org/?q=an:1058.57001}
  {\path{Zbl:1058.57001}}.

\bibitem{huszar2020combinatorial}
K.~Husz\'ar.
\newblock {\em Combinatorial width parameters for 3-dimensonal manifolds}.
\newblock PhD thesis, IST Austria, June 2020.
\newblock \href {https://dx.doi.org/10.15479/AT:ISTA:8032}
  {\path{doi:10.15479/AT:ISTA:8032}}.

\bibitem{huszar2019manifold}
K.~Husz\'ar and J.~Spreer.
\newblock 3-{M}anifold triangulations with small treewidth.
\newblock In {\em 35th {I}nt. {S}ymp. {C}omput. {G}eom. ({SoCG} 2019)}, volume
  129 of {\em LIPIcs. Leibniz Int. Proc. Inf.}, pages 44:1--44:20. Schloss
  Dagstuhl--Leibniz-Zent. Inf., 2019.
\newblock \href {https://dx.doi.org/10.4230/LIPIcs.SoCG.2019.44}
  {\path{doi:10.4230/LIPIcs.SoCG.2019.44}}, \href
  {https://mathscinet.ams.org/mathscinet-getitem?mr=3968630}
  {\path{MR:3968630}}.

\bibitem{huszar2019treewidth}
K.~Husz\'ar, J.~Spreer, and U.~Wagner.
\newblock On the treewidth of triangulated 3-manifolds.
\newblock {\em J. Comput. Geom.}, 10(2):70--98, 2019.
\newblock \href {https://dx.doi.org/10.20382/jogc.v10i2a5}
  {\path{doi:10.20382/jogc.v10i2a5}}, \href
  {https://mathscinet.ams.org/mathscinet-getitem?mr=4039886}
  {\path{MR:4039886}}, \href {https://zbmath.org/?q=an:07150581}
  {\path{Zbl:07150581}}.

\bibitem{jaco1980lectures}
W.~Jaco.
\newblock {\em Lectures on Three-Manifold Topology}, volume~43 of {\em CBMS
  Reg. Conf. Ser. Math.}
\newblock Amer. Math. Soc., Providence, R.I., 1980.
\newblock \href {https://dx.doi.org/10.1090/cbms/043}
  {\path{doi:10.1090/cbms/043}}, \href
  {https://mathscinet.ams.org/mathscinet-getitem?mr=565450} {\path{MR:565450}},
  \href {https://zbmath.org/?q=an:0433.57001} {\path{Zbl:0433.57001}}.

\bibitem{jaco2003efficient}
W.~Jaco and J.~H. Rubinstein.
\newblock {$0$}-efficient triangulations of 3-manifolds.
\newblock {\em J. Differential Geom.}, 65(1):61--168, 2003.
\newblock \href {https://dx.doi.org/10.4310/jdg/1090503053}
  {\path{doi:10.4310/jdg/1090503053}}, \href
  {https://mathscinet.ams.org/mathscinet-getitem?mr=2057531}
  {\path{MR:2057531}}, \href {https://zbmath.org/?q=an:1068.57023}
  {\path{Zbl:1068.57023}}.

\bibitem{jaco2006layered}
W.~Jaco and J.~H. Rubinstein.
\newblock Layered-triangulations of 3-manifolds, 2006.
\newblock 97 pages, 32 figures.
\newblock \href {https://arxiv.org/abs/math/0603601}
  {\path{arXiv:math/0603601}}.

\bibitem{kazhdan1968selberg}
D.~A. Ka\v{z}dan and G.~A. Margulis.
\newblock A proof of {S}elberg's hypothesis.
\newblock {\em Math. USSR--Sbornik}, 4(1):147--152, 1968.
\newblock Translation from {\em Mat. Sb. (N.S.)}, 75(117):163--168, 1968.
  Translated by: Z.\ Skalsky.
\newblock \href {https://dx.doi.org/10.1070/SM1968v004n01ABEH002782}
  {\path{doi:10.1070/SM1968v004n01ABEH002782}}, \href
  {https://mathscinet.ams.org/mathscinet-getitem?mr=0223487}
  {\path{MR:0223487}}, \href {https://zbmath.org/?q=an:0241.22024}
  {\path{Zbl:0241.22024}}.

\bibitem{kawarabayashi2007graph}
K.~Kawarabayashi and B.~Mohar.
\newblock Some recent progress and applications in graph minor theory.
\newblock {\em Graphs Combin.}, 23(1):1--46, 2007.
\newblock \href {https://dx.doi.org/10.1007/s00373-006-0684-x}
  {\path{doi:10.1007/s00373-006-0684-x}}, \href
  {https://mathscinet.ams.org/mathscinet-getitem?mr=2292102}
  {\path{MR:2292102}}, \href {https://zbmath.org/?q=an:1114.05096}
  {\path{Zbl:1114.05096}}.

\bibitem{kloks1994treewidth}
T.~Kloks.
\newblock {\em Treewidth: Computations and Approximations}, volume 842 of {\em
  Lect. Notes Comput. Sci.}
\newblock Springer, 1994.
\newblock \href {https://dx.doi.org/10.1007/BFb0045375}
  {\path{doi:10.1007/BFb0045375}}, \href
  {https://mathscinet.ams.org/mathscinet-getitem?mr=1312164}
  {\path{MR:1312164}}, \href {https://zbmath.org/?q=an:0825.68144}
  {\path{Zbl:0825.68144}}.

\bibitem{kobayashi2011linear}
T.~Kobayashi and Y.~Rieck.
\newblock A linear bound on the tetrahedral number of manifolds of bounded
  volume (after {J}\o rgensen and {T}hurston).
\newblock In {\em Topology and Geometry in Dimension Three}, volume 560 of {\em
  Contemp. Math.}, pages 27--42. Amer. Math. Soc., Providence, RI, 2011.
\newblock \href {https://dx.doi.org/10.1090/conm/560/11089}
  {\path{doi:10.1090/conm/560/11089}}, \href
  {https://mathscinet.ams.org/mathscinet-getitem?mr=2866921}
  {\path{MR:2866921}}, \href {https://zbmath.org/?q=an:1335.57028}
  {\path{Zbl:1335.57028}}.

\bibitem{lovasz2006graph}
L.~Lov\'{a}sz.
\newblock Graph minor theory.
\newblock {\em Bull. Amer. Math. Soc. (N.S.)}, 43(1):75--86, 2006.
\newblock \href {https://dx.doi.org/10.1090/S0273-0979-05-01088-8}
  {\path{doi:10.1090/S0273-0979-05-01088-8}}, \href
  {https://mathscinet.ams.org/mathscinet-getitem?mr=2188176}
  {\path{MR:2188176}}, \href {https://zbmath.org/?q=an:1082.05082}
  {\path{Zbl:1082.05082}}.

\bibitem{maria2019parametrized}
C.~Maria.
\newblock Parameterized complexity of quantum invariants, 2019.
\newblock \href {https://arxiv.org/abs/1910.00477} {\path{arXiv:1910.00477}}.

\bibitem{maria2019treewidth}
C.~Maria and J.~Purcell.
\newblock Treewidth, crushing and hyperbolic volume.
\newblock {\em Algebr. Geom. Topol.}, 19(5):2625--2652, 2019.
\newblock \href {https://dx.doi.org/10.2140/agt.2019.19.2625}
  {\path{doi:10.2140/agt.2019.19.2625}}, \href
  {https://mathscinet.ams.org/mathscinet-getitem?mr=4023324}
  {\path{MR:4023324}}, \href {https://zbmath.org/?q=an:07142614}
  {\path{Zbl:07142614}}.

\bibitem{maria2019polynomial}
C.~Maria and J.~Spreer.
\newblock A polynomial-time algorithm to compute {T}uraev--{V}iro invariants
  $\operatorname{TV}_{4,q}$ of 3-manifolds with bounded first {Betti} number.
\newblock {\em Found. Comput. Math.}, pages 1--22, 2019.
\newblock Online: 11.11.2019.
\newblock \href {https://dx.doi.org/10.1007/s10208-019-09438-8}
  {\path{doi:10.1007/s10208-019-09438-8}}.

\bibitem{martelli2016introduction}
B.~Martelli.
\newblock {\em An Introduction to Geometric Topology}.
\newblock CreateSpace, 2016.
\newblock URL:
  \url{http://people.dm.unipi.it/martelli/geometric_topology.html}, \href
  {https://arxiv.org/abs/1610.02592} {\path{arXiv:1610.02592}}.

\bibitem{mcmullen2011evolution}
C.~T. McMullen.
\newblock The evolution of geometric structures on 3-manifolds.
\newblock {\em Bull. Amer. Math. Soc. (N.S.)}, 48(2):259--274, 2011.
\newblock \href {https://dx.doi.org/10.1090/S0273-0979-2011-01329-5}
  {\path{doi:10.1090/S0273-0979-2011-01329-5}}, \href
  {https://mathscinet.ams.org/mathscinet-getitem?mr=2774092}
  {\path{MR:2774092}}, \href {https://zbmath.org/?q=an:1214.57017}
  {\path{Zbl:1214.57017}}.

\bibitem{moise1952affine}
E.~E. Moise.
\newblock Affine structures in {$3$}-manifolds. {V}. {T}he triangulation
  theorem and {H}auptvermutung.
\newblock {\em Ann. Math. (2)}, 56:96--114, 1952.
\newblock \href {https://dx.doi.org/10.2307/1969769}
  {\path{doi:10.2307/1969769}}, \href
  {https://mathscinet.ams.org/mathscinet-getitem?mr=0048805}
  {\path{MR:0048805}}, \href {https://zbmath.org/?q=an:0048.17102}
  {\path{Zbl:0048.17102}}.

\bibitem{mostow1968quasi}
G.~D. Mostow.
\newblock Quasi-conformal mappings in {$n$}-space and the rigidity of
  hyperbolic space forms.
\newblock {\em Publ. Math. IH\'{E}S}, 34:53--104, 1968.
\newblock URL: \url{http://www.numdam.org/item?id=PMIHES_1968__34__53_0}, \href
  {https://mathscinet.ams.org/mathscinet-getitem?mr=236383} {\path{MR:236383}},
  \href {https://zbmath.org/?q=an:0189.09402} {\path{Zbl:0189.09402}}.

\bibitem{perelman2002entropy}
G.~Perelman.
\newblock The entropy formula for the {R}icci flow and its geometric
  applications, 2002.
\newblock 39 pages.
\newblock \href {https://arxiv.org/abs/math/0211159}
  {\path{arXiv:math/0211159}}.

\bibitem{perelman2003ricci}
G.~Perelman.
\newblock Ricci flow with surgery on three-manifolds, 2003.
\newblock 22 pages.
\newblock \href {https://arxiv.org/abs/math/0303109}
  {\path{arXiv:math/0303109}}.

\bibitem{porti2008geometrization}
J.~Porti.
\newblock Geometrization of three manifolds and {P}erelman's proof.
\newblock {\em Rev. R. Acad. Cienc. Exactas F\'{i}s. Nat. Ser. A Math. RACSAM},
  102(1):101--125, 2008.
\newblock \href {https://dx.doi.org/10.1007/BF03191814}
  {\path{doi:10.1007/BF03191814}}, \href
  {https://mathscinet.ams.org/mathscinet-getitem?mr=2416241}
  {\path{MR:2416241}}, \href {https://zbmath.org/?q=an:1170.57016}
  {\path{Zbl:1170.57016}}.

\bibitem{purcell2020book}
J.~S. Purcell.
\newblock Hyperbolic {K}not {T}heory (preprint).
\newblock February 2020.
\newblock URL: \url{http://users.monash.edu/~jpurcell/book/HypKnotTheory.pdf},
  \href {https://arxiv.org/abs/2002.12652} {\path{arXiv:2002.12652}}.

\bibitem{purcell2017independence}
J.~S. Purcell and A.~Zupan.
\newblock Independence of volume and genus {$g$} bridge numbers.
\newblock {\em Proc. Amer. Math. Soc.}, 145(4):1805--1818, 2017.
\newblock \href {https://dx.doi.org/10.1090/proc/13327}
  {\path{doi:10.1090/proc/13327}}, \href
  {https://mathscinet.ams.org/mathscinet-getitem?mr=3601570}
  {\path{MR:3601570}}, \href {https://zbmath.org/?q=an:1364.57009}
  {\path{Zbl:1364.57009}}.

\bibitem{robertson1983graph}
N.~Robertson and P.~D. Seymour.
\newblock Graph minors. {I.} {E}xcluding a forest.
\newblock {\em J. Comb. Theory, Ser. {B}}, 35(1):39--61, 1983.
\newblock \href {https://dx.doi.org/10.1016/0095-8956(83)90079-5}
  {\path{doi:10.1016/0095-8956(83)90079-5}}, \href
  {https://mathscinet.ams.org/mathscinet-getitem?mr=723569} {\path{MR:723569}},
  \href {https://zbmath.org/?q=an:0521.05062} {\path{Zbl:0521.05062}}.

\bibitem{robertson1986graph}
N.~Robertson and P.~D. Seymour.
\newblock Graph minors. {II.} {A}lgorithmic aspects of tree-width.
\newblock {\em J. Algorithms}, 7(3):309--322, 1986.
\newblock \href {https://dx.doi.org/10.1016/0196-6774(86)90023-4}
  {\path{doi:10.1016/0196-6774(86)90023-4}}, \href
  {https://mathscinet.ams.org/mathscinet-getitem?mr=855559} {\path{MR:855559}},
  \href {https://zbmath.org/?q=an:0611.05017} {\path{Zbl:0611.05017}}.

\bibitem{sakuma1995examples}
M.~Sakuma and J.~Weeks.
\newblock Examples of canonical decompositions of hyperbolic link complements.
\newblock {\em Japan. J. Math. (N.S.)}, 21(2):393--439, 1995.
\newblock \href {https://dx.doi.org/10.4099/math1924.21.393}
  {\path{doi:10.4099/math1924.21.393}}, \href
  {https://mathscinet.ams.org/mathscinet-getitem?mr=1364387}
  {\path{MR:1364387}}, \href {https://zbmath.org/?q=an:0858.57021}
  {\path{Zbl:0858.57021}}.

\bibitem{saveliev2012lectures}
N.~Saveliev.
\newblock {\em Lectures on the Topology of 3-Manifolds: An Introduction to the
  Casson Invariant}.
\newblock De Gruyter Textbook. Walter de Gruyter \& Co., Berlin, 2nd edition,
  2012.
\newblock \href {https://dx.doi.org/10.1515/9783110250367}
  {\path{doi:10.1515/9783110250367}}, \href
  {https://mathscinet.ams.org/mathscinet-getitem?mr=2893651}
  {\path{MR:2893651}}, \href {https://zbmath.org/?q=an:1246.57003}
  {\path{Zbl:1246.57003}}.

\bibitem{scharlemann2002heegaard}
M.~Scharlemann.
\newblock Heegaard splittings of compact 3-manifolds.
\newblock In {\em Handbook of Geometric Topology}, pages 921--953.
  North-Holland, Amsterdam, 2001.
\newblock \href {https://dx.doi.org/10.1016/B978-044482432-5/50019-6}
  {\path{doi:10.1016/B978-044482432-5/50019-6}}, \href
  {https://mathscinet.ams.org/mathscinet-getitem?mr=1886684}
  {\path{MR:1886684}}, \href {https://zbmath.org/?q=an:0985.57005}
  {\path{Zbl:0985.57005}}.

\bibitem{scharlemann2016lecture}
M.~Scharlemann, J.~Schultens, and T.~Saito.
\newblock {\em Lecture Notes on Generalized {H}eegaard Splittings}.
\newblock World Scientific Publishing Co. Pte. Ltd., Hackensack, NJ, 2016.
\newblock \href {https://dx.doi.org/10.1142/10019} {\path{doi:10.1142/10019}},
  \href {https://mathscinet.ams.org/mathscinet-getitem?mr=3585907}
  {\path{MR:3585907}}, \href {https://zbmath.org/?q=an:1356.57004}
  {\path{Zbl:1356.57004}}.

\bibitem{scharlemann1992thin}
M.~Scharlemann and A.~Thompson.
\newblock Thin position for {$3$}-manifolds.
\newblock In {\em Geometric topology ({H}aifa, 1992)}, volume 164 of {\em
  Contemp. Math.}, pages 231--238. Amer. Math. Soc., Providence, RI, 1994.
\newblock \href {https://dx.doi.org/10.1090/conm/164/01596}
  {\path{doi:10.1090/conm/164/01596}}, \href
  {https://mathscinet.ams.org/mathscinet-getitem?mr=1282766}
  {\path{MR:1282766}}, \href {https://zbmath.org/?q=an:0818.57013}
  {\path{Zbl:0818.57013}}.

\bibitem{schultens1993classification}
J.~Schultens.
\newblock The classification of {H}eegaard splittings for (compact orientable
  surface){$\,\times\, S^1$}.
\newblock {\em Proc. London Math. Soc. (3)}, 67(2):425--448, 1993.
\newblock \href {https://dx.doi.org/10.1112/plms/s3-67.2.425}
  {\path{doi:10.1112/plms/s3-67.2.425}}, \href
  {https://mathscinet.ams.org/mathscinet-getitem?mr=1226608}
  {\path{MR:1226608}}, \href {https://zbmath.org/?q=an:0789.57012}
  {\path{Zbl:0789.57012}}.

\bibitem{schultens2014introduction}
J.~Schultens.
\newblock {\em Introduction to 3-Manifolds}, volume 151 of {\em Grad. Stud.
  Math.}
\newblock Amer. Math. Soc., Providence, RI, 2014.
\newblock \href {https://dx.doi.org/10.1090/gsm/151}
  {\path{doi:10.1090/gsm/151}}, \href
  {https://mathscinet.ams.org/mathscinet-getitem?mr=3203728}
  {\path{MR:3203728}}, \href {https://zbmath.org/?q=an:1295.57001}
  {\path{Zbl:1295.57001}}.

\bibitem{shalen2007hyperbolic}
P.~B. Shalen.
\newblock Hyperbolic volume, {H}eegaard genus and ranks of groups.
\newblock In {\em Workshop on {H}eegaard {S}plittings}, volume~12 of {\em Geom.
  Topol. Monogr.}, pages 335--349. Geom. Topol. Publ., Coventry, 2007.
\newblock \href {https://dx.doi.org/10.2140/gtm.2007.12.335}
  {\path{doi:10.2140/gtm.2007.12.335}}, \href
  {https://mathscinet.ams.org/mathscinet-getitem?mr=2408254}
  {\path{MR:2408254}}, \href {https://zbmath.org/?q=an:1140.57009}
  {\path{Zbl:1140.57009}}.

\bibitem{takahashi1994minimal}
A.~Takahashi, S.~Ueno, and Y.~Kajitani.
\newblock Minimal acyclic forbidden minors for the family of graphs with
  bounded path-width.
\newblock {\em Discrete Math.}, 127(1--3):293--304, 1994.
\newblock \href {https://dx.doi.org/10.1016/0012-365X(94)90092-2}
  {\path{doi:10.1016/0012-365X(94)90092-2}}, \href
  {https://mathscinet.ams.org/mathscinet-getitem?mr=1273610}
  {\path{MR:1273610}}, \href {https://zbmath.org/?q=an:0795.05123}
  {\path{Zbl:0795.05123}}.

\bibitem{thurston1982three}
W.~P. Thurston.
\newblock Three-dimensional manifolds, {K}leinian groups and hyperbolic
  geometry.
\newblock {\em Bull. Amer. Math. Soc. (N.S.)}, 6(3):357--381, 1982.
\newblock \href {https://dx.doi.org/10.1090/S0273-0979-1982-15003-0}
  {\path{doi:10.1090/S0273-0979-1982-15003-0}}, \href
  {https://mathscinet.ams.org/mathscinet-getitem?mr=648524} {\path{MR:648524}},
  \href {https://zbmath.org/?q=an:0496.57005} {\path{Zbl:0496.57005}}.

\bibitem{thurston2014three}
W.~P. Thurston.
\newblock {\em Three-Dimensional Geometry and Topology. {V}ol. 1}, volume~35 of
  {\em Princeton Math. Ser.}
\newblock Princeton Univ. Press, Princeton, NJ, 1997.
\newblock Edited by S. Levy.
\newblock \href {https://dx.doi.org/10.1515/9781400865321}
  {\path{doi:10.1515/9781400865321}}, \href
  {https://mathscinet.ams.org/mathscinet-getitem?mr=1435975}
  {\path{MR:1435975}}, \href {https://zbmath.org/?q=an:0873.57001}
  {\path{Zbl:0873.57001}}.

\bibitem{thurston2002geometry}
W.~P. Thurston.
\newblock The geometry and topology of three-manifolds.
\newblock Electronic version 1.1, March, 2002.
\newblock URL: \url{http://library.msri.org/books/gt3m}.

\end{thebibliography}

\clearpage

\appendix

\section{The primal and dual construction of compression bodies}
\label{app:compbody}

\begin{figure}[!ht]
\centerline{\includegraphics[scale=\MyFigScale]{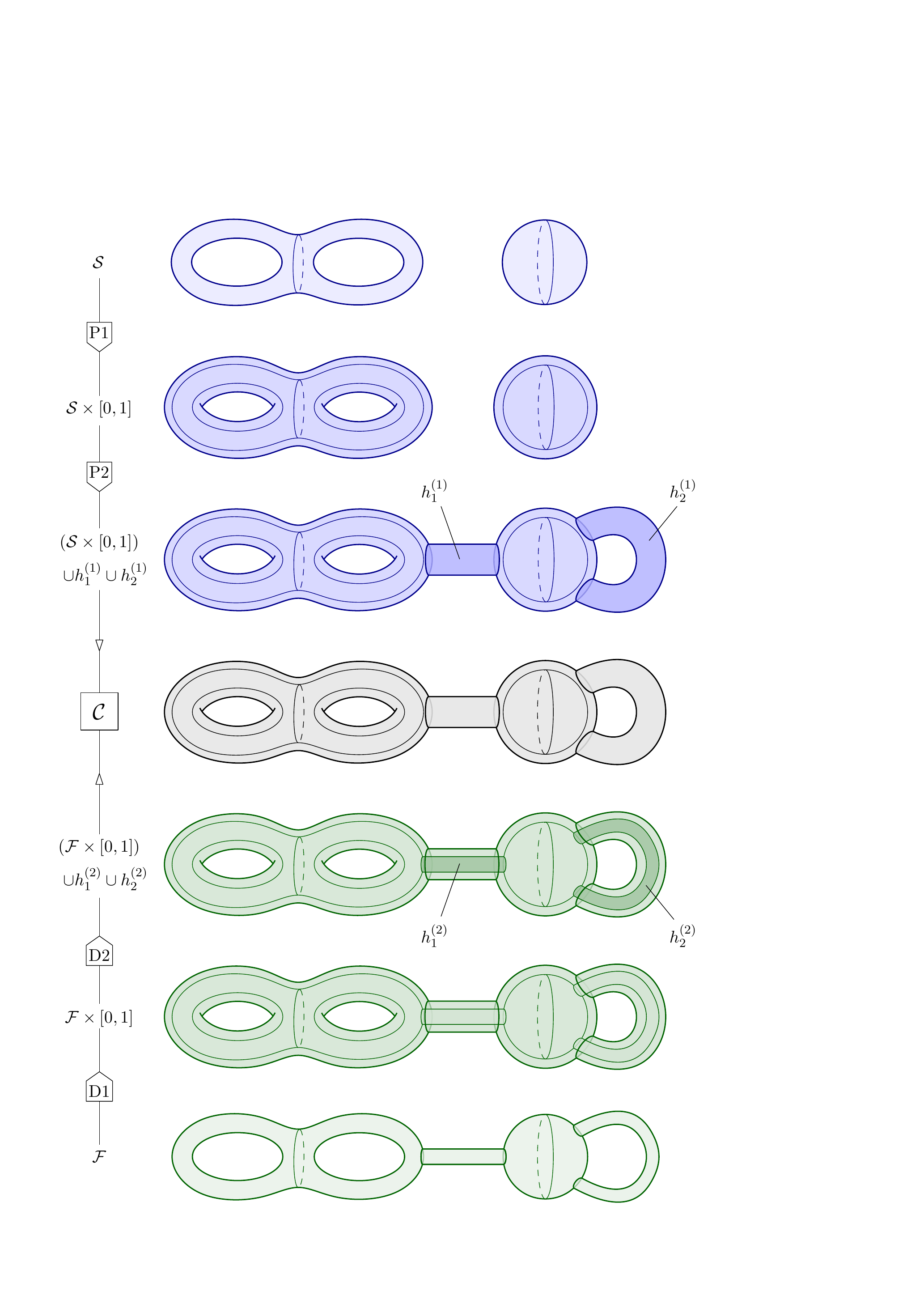}}
\caption{The primal and dual ways of constructing a compression body $\compbody$, cf.\ Section \ref{ssec:bodies}.}
\label{fig:compbody}
\end{figure}

\clearpage

\section{Heegaard splittings of 3-manifolds with boundary}
\label{app:heegaard}

\begin{example}[Heegaard splittings from triangulations, II -- based on  {\cite[Theorem 2.1.11]{scharlemann2016lecture}}]
\label{ex:heegaard_boundary}
Let $\tri$ be a triangulation of $\manifold$ with partition $\partial_1\manifold \cup \partial_2\manifold$ of its boundary components. Suppose that no simplex in $\tri$ is incident to more than one component of $\partial\manifold$.\footnote{This can be achieved, e.g., by passing to the first barycentric subdivision of $\tri$ if necessary.} Take the first barycentric subdivision $\sd_1(\tri)$ of $\tri$. Recall that $\tri^{(1)}$ and $\Gamma(\tri)$ denote the 1-skeleton and the dual graph of $\tri$. Their first barycentric subdivisons $\tri^{(1)}_{\sd}$ and $\Gamma(\tri)_{\sd}$ are both naturally contained in $\sd_1(\tri)$. Consider the subcomplex $\nbh{\partial_2\manifold} \subset \sd_1(\tri)$ consisting of all simplices incident to $\partial_2\manifold$.  We define two further subcomplexes of $\sd_1(\tri)$, namely
\begin{itemize}
	\item $\Gamma_1 = \partial_1\manifold \cup \{\text{vertices and edges of $\tri^{(1)}_{\sd}$ {\em not} incident to $\partial_2\manifold$}\}$, and
	\item $\Gamma_2 = \nbh{\partial_2\manifold} \cup \Gamma(\tri)_{\sd}.$
\end{itemize}
Now pass to the second barycentric subdivision $\sd_2(\tri)$ and let $(\Gamma_i)_{\sd}$ denote the image of $\Gamma_i$ under this operation ($i=1,2$). Let $\eta(\Gamma_i)$ be the ``thickening'' of $\Gamma_i$, i.e., the subcomplex of $\sd_2(\tri)$ formed by all simplices incident to $(\Gamma_i)_{\sd}$.
One can readily verify that $\eta(\Gamma_1)$ and $\eta(\Gamma_2)$ are compression bodies whose union is $\manifold$, their upper boundaries satisfy $\partial_+\eta(\Gamma_1) = \partial_+\eta(\Gamma_2) = \eta(\Gamma_1) \cap \eta(\Gamma_2)$, and for their lower boundaries $\partial_-\eta(\Gamma_1) = \partial_1\manifold$ and $\partial_-\eta(\Gamma_2) = \partial_2\manifold$.
Hence $\eta(\Gamma_1)$ and $\eta(\Gamma_2)$ form a Heegaard splitting of $\manifold$ compatible with the given partition of its boundary components. See Figure \ref{fig:heegaard_boundary} for an illustration via ``quadrangulations.''
\end{example}

\vfill

\begin{figure}[ht]
	\noindent
	\begin{minipage}{0.325\textwidth}
		\centering
		\includegraphics[scale=\MyFigScale]{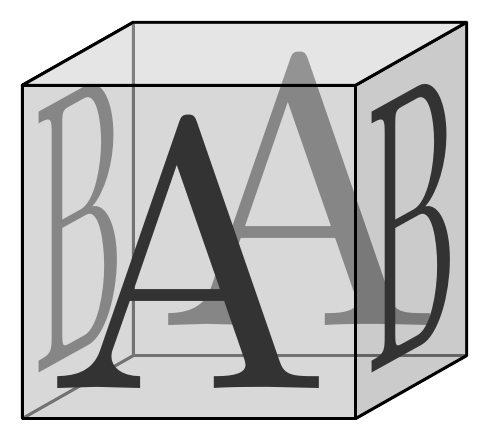}
		
		{\small (i) $\manifold = \torus^2 \times [0,1]$ \phantom{lorem ipsum dolor sit amet}}
	\end{minipage}~
	\begin{minipage}{0.325\textwidth}
		\centering
		\includegraphics[scale=\MyFigScale]{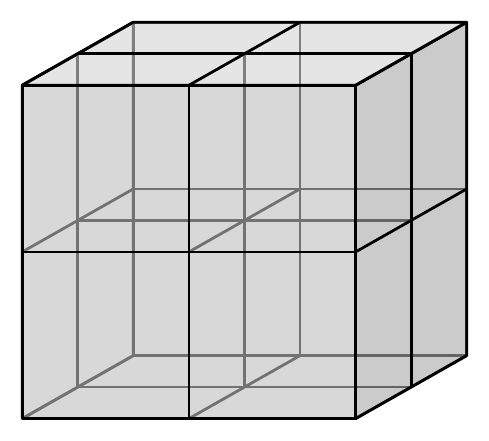}

		{\small (ii) A quadrangulation $\mathcal{Q}$ of $\manifold$ with eight cubes}
	\end{minipage}~
	\begin{minipage}{0.325\textwidth}
		\centering
		\includegraphics[scale=\MyFigScale]{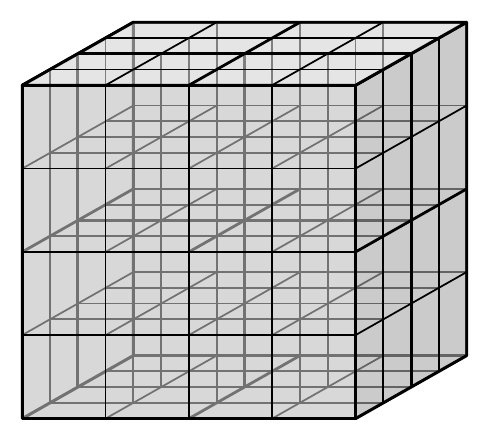}

		{\small (iii) The first barycentric subdivision $\operatorname{sd}_1(\mathcal{Q})$ of $\mathcal{Q}$}
	\end{minipage}

	\vspace{18pt}

	\noindent
	\begin{minipage}{0.325\textwidth}
		\centering
		\includegraphics[scale=\MyFigScale]{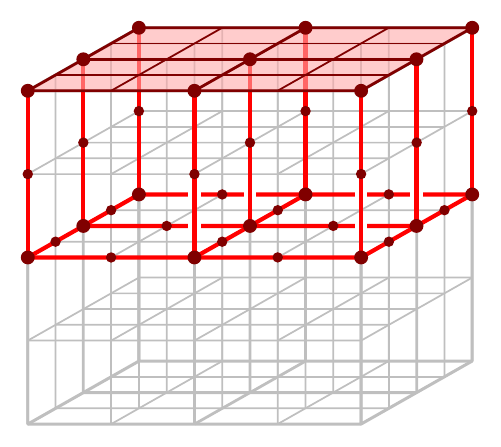}

		{\small (iv) ${\color{BrickRed}\boldsymbol{\Gamma_1}}$}
	\end{minipage}~
	\begin{minipage}{0.325\textwidth}

	\vspace{12pt}
	
	\begin{flushleft}
		\small{${\color{BrickRed}\boldsymbol{\Gamma_1}} = \partial_1\manifold \cup \{$vertices \& edges of $\tri^{(1)}_{\sd}$ avoiding $\nbh{\partial_2\manifold}\}$}
		
	\vspace{30pt}
	
	\hfill\begin{minipage}{\dimexpr\textwidth-1cm}
		\small{${\color{Blue}\boldsymbol{\Gamma_2}} = \nbh{\partial_2\manifold} \cup \Gamma(\tri)_{\sd}$}	
	\end{minipage}
	\end{flushleft}
	
	{\small\phantom{(v) ${\color{Blue}\boldsymbol{\Gamma_2}}$}}
	\end{minipage}~
	\begin{minipage}{0.325\textwidth}
		\centering
		\includegraphics[scale=\MyFigScale]{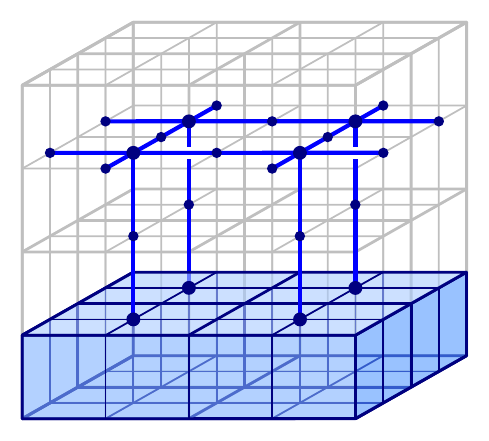}

		{\small (v) ${\color{Blue}\boldsymbol{\Gamma_2}}$}
	\end{minipage}

	\vspace{6pt}
	\caption{Building a Heegaard splitting of the thickened torus $\torus^2 \times [0,1]$ from a quadrangulation.}
	\label{fig:heegaard_boundary}
\end{figure}

\vfill

\end{document}